\newcommand{\field}[1]{\mathbb{#1}}
\newcommand{\bR}{\field{R}}
\newcommand{\bN}{\field{N}}
\def\rd{\bR^d}
\def\rd{\mathbb{R}^d}
\def\f{\varphi}
\def\cS{\mathcal{S}}
\def\rd{\bR^d}
\def\rdd{{\bR^{2d}}}
\def\lrd{L^2(\rd)}
\def\<{\left<}
\def\>{\right>}
\def\mv1{M_v^1}
\def\phas{(x,\omega )}
\def\Fur{\mathcal{F}}
\newcommand{\tfs}{time-frequency shift}
\begin{document}

\title*{Signal Analysis using Born-Jordan-type Distributions}
\author{Elena Cordero, Maurice de Gosson, Monika D\"{o}rfler and Fabio Nicola}
\institute{Elena Cordero \at Universit\`a di Torino, Dipartimento di
	Matematica, via Carlo Alberto 10, 10123 Torino, Italy \email{elena.cordero@unito.it} \and Maurice de Gosson \at University of Vienna, Faculty of Mathematics,
	Oskar-Morgenstern-Platz 1 A-1090 Wien, Austria
\email{maurice.de.gosson@univie.ac.at} \and Monika D\"{o}rfler \at University of Vienna, Faculty of Mathematics,
Oskar-Morgenstern-Platz 1 A-1090 Wien, Austria \email{monika.doerfler@univie.ac.at} \and Fabio Nicola \at Dipartimento di Scienze Matematiche, Politecnico di Torino, corso
Duca degli Abruzzi 24, 10129 Torino, Italy \email{fabio.nicola@polito.it}}

%
%
\maketitle

\abstract{In this note we exhibit recent advances in signal analysis via time-frequency distributions.
New members of the Cohen class, generalizing the Wigner distribution, reveal to be effective in damping artefacts of some signals. We will survey their main properties and drawbacks and  present open problems related to such phenomena.}

\keywords{Time-frequency analysis, Wigner distribution, Born-Jordan distribution,
	B-Splines, Interferences, wave-front set, modulation spaces}
\section{Introduction}
\label{Int:1}
The Wigner distribution (Wigner transform, Wigner function or Wigner-Ville distribution) has a long tradition which started as a probability quasi-distribution in 1932 with  Eugene Wigner's ground-breaking paper  \cite{Wigner1932}.
In 1948 it was reinvented  by  Jean Ville in \cite{Ville1948} as a quadratic representation of the local time-frequency energy of a signal. This was the starting point for its numerous applications in signal analysis: from electrical engineering and communication theory to any field involving the problem of treating signals: seismology, biology, medicine etc.

Given two functions  $f,g\in L^2(\mathbb{R}^{d})$,
their (cross-)Wigner distribution is defined to be
	\begin{equation}\label{WD}
	W(f,g)\phas =\int_{\mathbb{R}^{d}}e^{-2\pi i
		\omega y}f(x+\tfrac{1}{2}y)\overline{g(x-\tfrac{1}{2}y)}dy,
	\end{equation}
	($\omega y= \omega\cdot y$ denoting the scalar product in $\mathbb{R}^{d}$).
If 	$f=g$ we set $Wf:= W(f,f)$, named the Wigner distribution of $f$.
	The quadratic nature of the Wigner distribution $Wf$ causes the appearance of interferences between the distinct components of the signal. Roughly speaking, if a signal $f$ is sum of two components $f_1,f_2$, then its Wigner distribution becomes
\begin{equation*}  
W(f_1+f_2)=Wf_1+Wf_2+2{\mathcal Re}{W(f_1,f_2)}.
\end{equation*}
The cross-term $W(f_1,f_2)$ appearing above produces unwanted interferences. This will be made clear in the following examples.

Recall the translation and modulation operators:
\[
T_x f(y)=f(y-x),\quad M_\omega f(y)=e^{2\pi i y\omega}f(y),\quad x,\omega\in \rd,
\]
which combined  are called time-frequency shifts:
\[
\pi(z) f(y)=M_\omega T_x f(y)= e^{2\pi i y\omega}f(y-x),\quad z=(x,\omega).
\]
We can show such unwanted phenomenum for a signal $f$ that is the sum of 
four Gaussian atoms,  that is, \tfs s of the Gaussian function. For example,  in dimension $d=1$, consider the  Gaussian function $\f(t)=e^{-\pi t^2}$ and the signal  
 $f$ that is the  sum of the following  $4$ time-frequency shifts of $\f$:
\begin{equation}\label{signalf}
f=\pi(20, 0.25)\f+\pi(40, 0.15)\f+ \pi(40,0.35)\f+\pi(60,0.25)\f.
\end{equation}
\begin{center}
\includegraphics[width=0.9\textwidth]{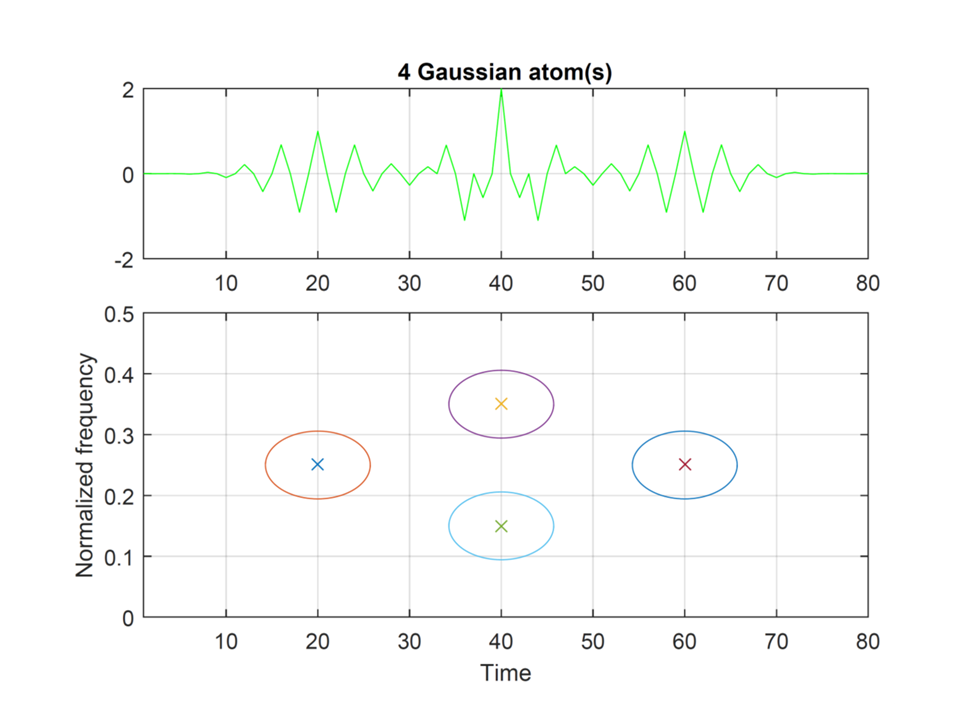}
\end{center}
The Wigner Distribution $Wf$ is represented by the picture below:
\begin{center}
\includegraphics[width=1.1\textwidth]{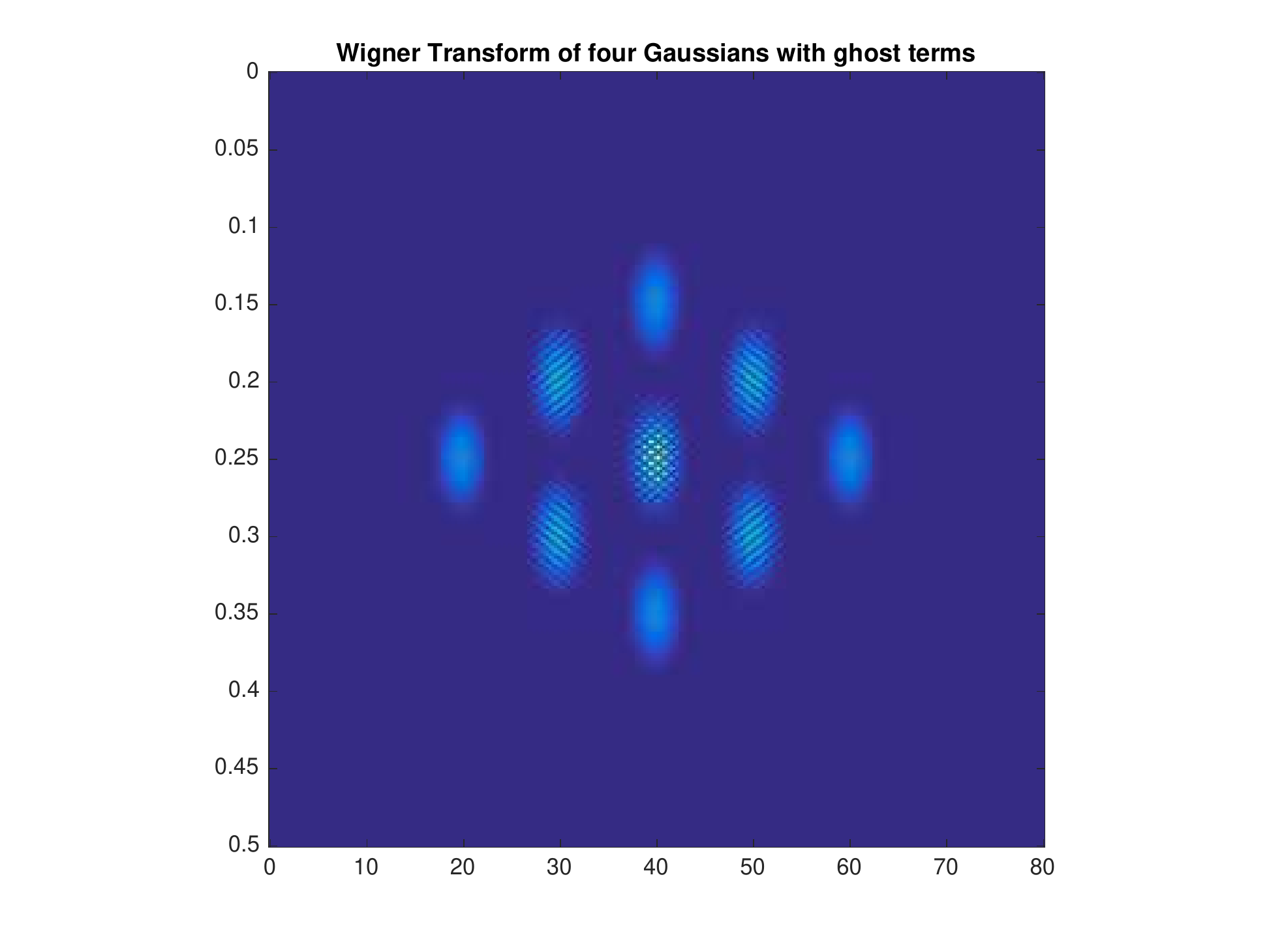}
\end{center}
\par

The four spots placed at the vertices of the rhombus are the real time-frequency content of the signal, whereas the other five spots  represent the  ``ghost frequencies'', as defined  in \cite{bogetal}, arising from the interferences of the four  components of the signal. 

To overcome this undesirable phenomenon, {\it reduced interference distributions} were proposed by Leon Cohen in  \cite{Cohen1}, see also the textbooks \cite{Cohen2,auger}.

The idea underneath the introduction of the so-called Cohen class is to reduce the interferences by a smoothing operation obtained using a convolution product.  Precisely,  we define the Cohen class a follows. 
\begin{definition}\label{Cohenclass}
A member of the Cohen class $Qf$ is a quadratic time-frequency representation  obtained by convolving the Wigner
function $Wf$ with a distribution $\theta\in\mathcal{S}^{\prime
}(\mathbb{R}^{2d})$ (called  Cohen kernel), that is
\begin{equation}
\label{Cohenkernel}Qf=Wf\ast\theta,\quad f\in \cS(\rd).
\end{equation}
\end{definition}
For $f\in\cS(\rd)$ it is easy to check that $Wf\in\cS(\rdd)$ (see, e.g., \cite{deGossonWigner}) so that  $Qf$ is a well-defined tempered distribution for every $f\in\cS(\rd)$. \par 
A possible choice for the Cohen kernel  is $\theta=\mathcal{F}_{\sigma}\Theta^{1}$, with
$\mathcal{F}_{\sigma}\Theta^{1}$ being the symplectic Fourier transform of the
function
\begin{equation}
\label{sincxp}\Theta^{1}(x,\omega)=\mathrm{sinc}(x\omega)=%
\begin{cases}
\displaystyle\frac{\sin(\pi x\omega)}{\pi x\omega} & \mbox{for}\, x\omega
\neq0\\
1 & \mbox{for}\, x\omega=0.
\end{cases}
\end{equation}
 In
this way we obtain the Born-Jordan (BJ) distribution:
\begin{equation}
\label{bj}Q^{1} f= Wf \ast\mathcal{F}_{\sigma}(\Theta^{1}),\quad
f\in L^{2}(\mathbb{R}^{d}),
\end{equation}
see \cite{bogetal, Cohen1,Cohen2,Cohenbook,cgn0,auger,SpringerMG} and the
references therein.

Such distribution was first introduced in 1925 as a quantization rule by the phisicists M. Born and P. Jordan  \cite{bj} and later widely  employed by engineers for its \textit{smoothing effects}, cf. the textbook \cite{auger}.

Examples for tests and real-world signals show in the BJ distribution:
\begin{itemize}
	\item[a)]   ``ghost frequencies'' (arising from the interferences of the several components which do not share the same time or frequency localization) are damped very well, see \cite{bogetal,bo3}.
	\item[b)] The interferences arranged along the horizontal and vertical direction are substantially kept.
	\item[c)] The noise is, on the whole, reduced.
\end{itemize}
The following picture represents the BJ distribution of the signal $f$ in \eqref{signalf}.
\begin{center}
\includegraphics[width=1.2\textwidth]{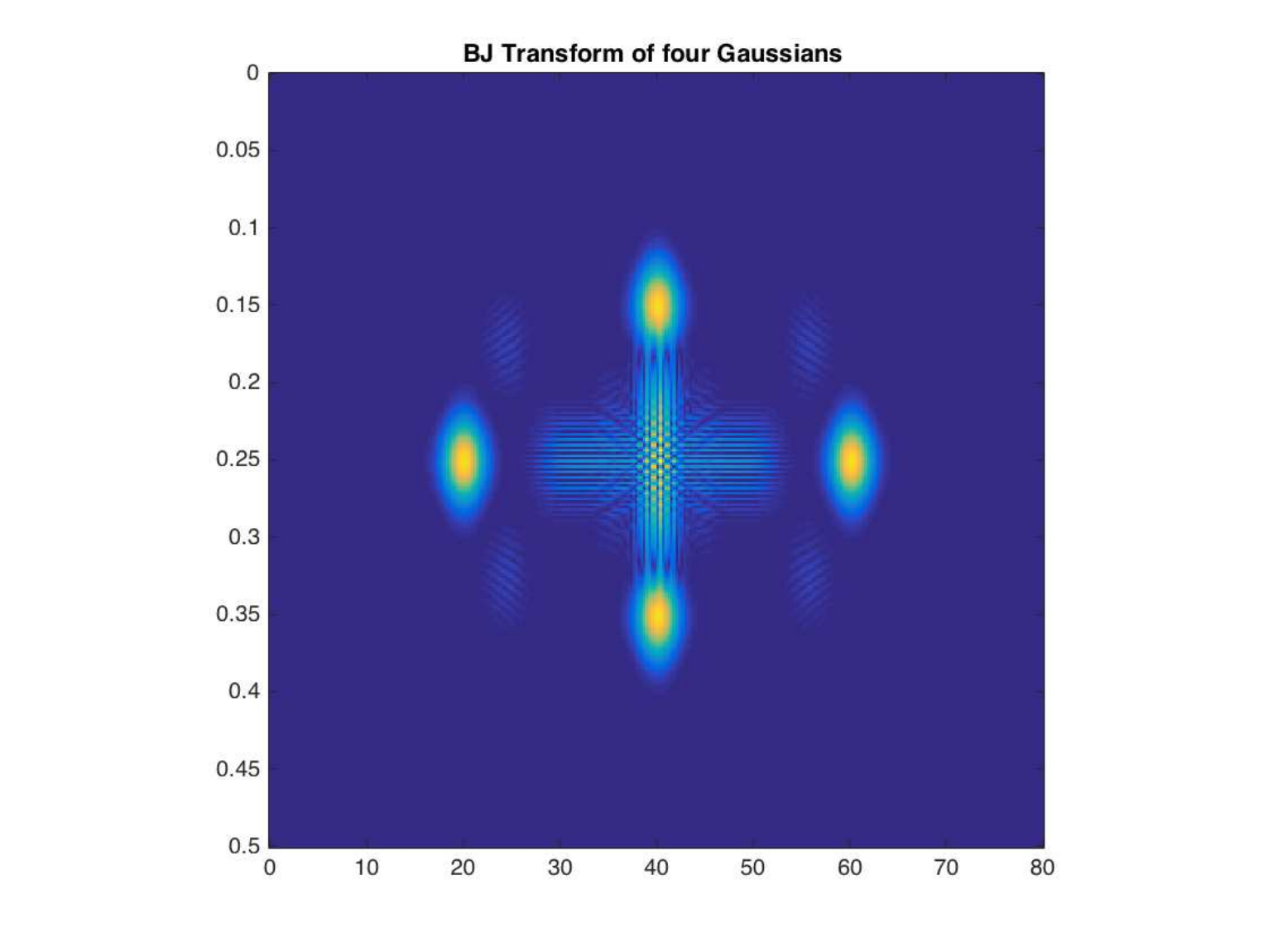}
\end{center}
The smoothing effect of the BJ distribution can be seen quite clearly. But still there are artefacts that one would like to damp. This is the main purpose of the contribution \cite{BJsplines2019}, where new distributions from the Cohen class were proposed and studied.  
Here we present the most relevant issues of the contribution above, highlighting the mathematical explanation of the smoothing effects of the distributions.

Following the suggestions of Jean-Pierre Gazeau, in the manuscript \cite{BJsplines2019}  new interesting Cohen kernels were proposed using the
B-spline functions $B_{n}$.\par The sequence of B-splines
$\{B_{n}\}_{n\in\mathbb{N}_{+}}$ is defined inductively as follows. The first
B-Spline is
\[
B_{1}(t)=\chi_{\left[ -\frac12,\frac12\right] }(t),
\]
and the spline $B_{n+1}$ is 
\begin{equation}
\label{bsplines}B_{n+1}(t)=(B_{n}\ast B_{1})(t)=\int_{\mathbb{R}}%
B_{n}(t-y)B_{1}(y)dy=\int_{-\frac12}^{\frac12} B_{n}(t-y)dy.
\end{equation}
The spline $B_{n}$ is a piecewise polynomial of degree at most $n-1$,
$n\in\mathbb{N}_{+}$, and satisfying $B_{n}\in\mathcal{C}^{n-2}(\mathbb{R})$,
$n\geq2$. For the main properties we refer, e.g., to \cite{Christensen2016}.

The $sinc$ function is recaptured as $\mathrm{sinc}(\xi)=\mathcal{F} B_{1}(\xi)$ and by induction we
infer
\begin{equation}
\label{Fouriersplines}\mathrm{sinc}^{n}(\xi)=\mathcal{F} B_{n}(\xi),\quad
n\in\mathbb{N}_{+}.
\end{equation}
This suggests the following definition.
\begin{definition}
	For $n\in\mathbb{N}$, the nth Born-Jordan kernel  on ${\mathbb{R}^{2d}}$ is
	defined by
	\begin{equation}
	\label{nCohenkerneln}\Theta^{n}(x,\omega)=\mathrm{sinc}^{n}(x\omega
	),\quad(x,\omega)\in{\mathbb{R}^{2d}}.
	\end{equation}
	The related  Born-Jordan distribution of order $n$ (BJDn) is
	\begin{equation}
	\label{e17}Q^{n} f=Wf\ast\mathcal{F}_{\sigma}(\Theta^{n}),\quad f\in L^{2}%
	(\mathbb{R}^{d}).
	\end{equation}
	The cross-BJDn is given by
	\begin{equation}
	\label{crossBJ}Q^{n} (f,g)=W(f,g)\ast\mathcal{F}_{\sigma}(\Theta^{n}),\quad
	f,g\in L^{2}(\mathbb{R}^{d}).
	\end{equation}
	We write $Q^{n} (f,f)=Q^{n} f$, for every $f\in L^{2}(\mathbb{R}^{d})$.
	
	For $n=0$, $\Theta^{0}\equiv1$ and $\mathcal{F}_{\sigma}(1)=\delta$, so that
	$Q^{0}f=Wf$, the Wigner distribution of the signal $f$.
\end{definition}
In the picture below we computed the Born-Jordan distribution of order $3$ of the signal in \eqref{signalf}.
\begin{figure}
	\includegraphics[width=1.1\textwidth]{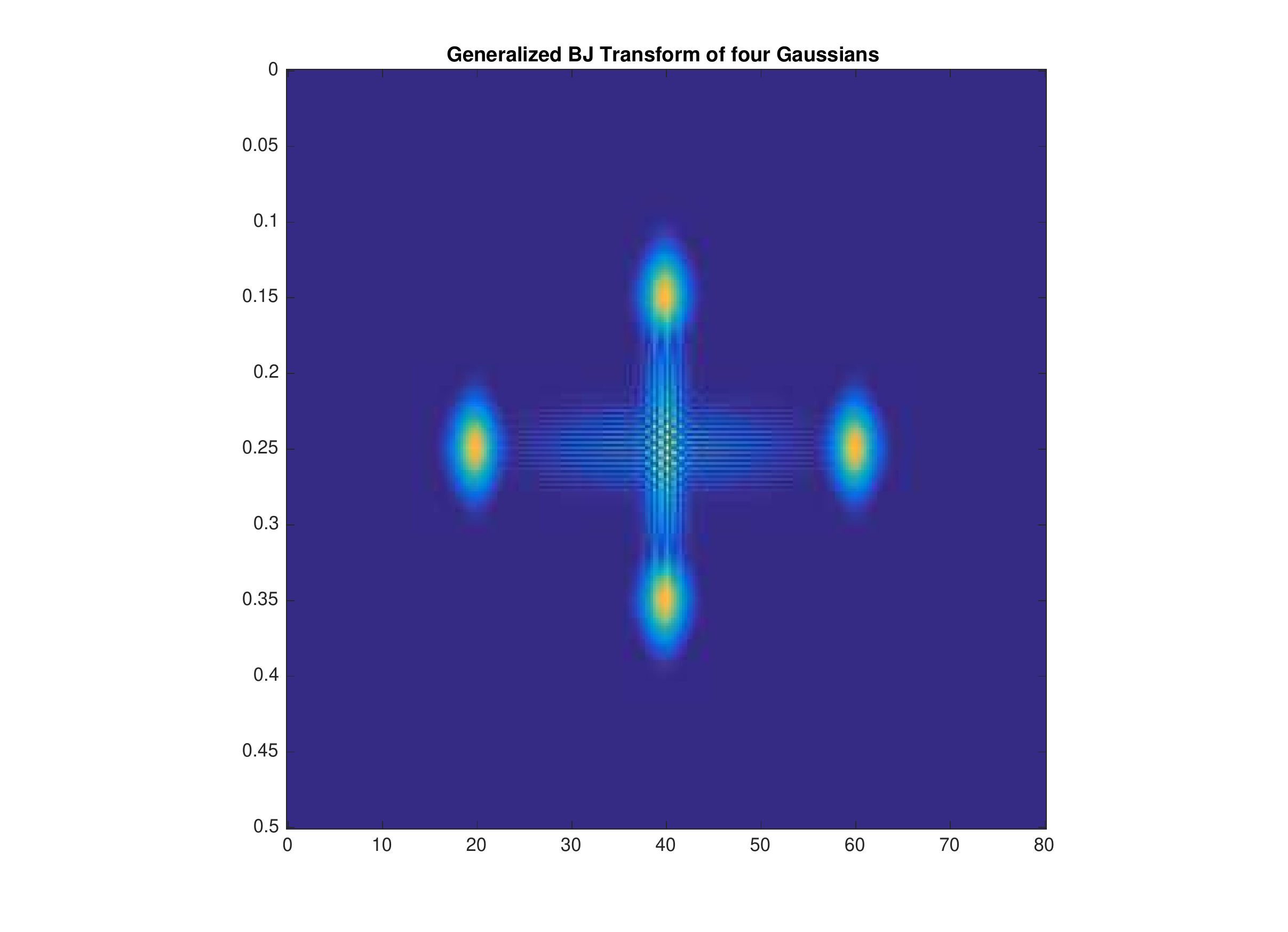}
\end{figure}

Notice that this new  class is a subset of the Cohen class,
containing the Wigner and the classical BJ  distribution. This subclass can be applied to signal processing since the mathematical explanations of their smoothing properties testifies the numerical evidences of dumping artefacts in many examples, and such reduction increases with $n$.
We survey  the different facets of this phenomenon,  referring mainly to the theoretical results in \cite{BJsplines2019}, highlighted  by new pictures in this note. \par
First, in Figures \ref{figWV}, \ref{figBJ} and \ref{figBJ3} we show the Wigner, BJ, BJ3 distribution of a rotation of the original  signal in \eqref{signalf}.
\begin{figure}
	\includegraphics[width=1.0\textwidth]{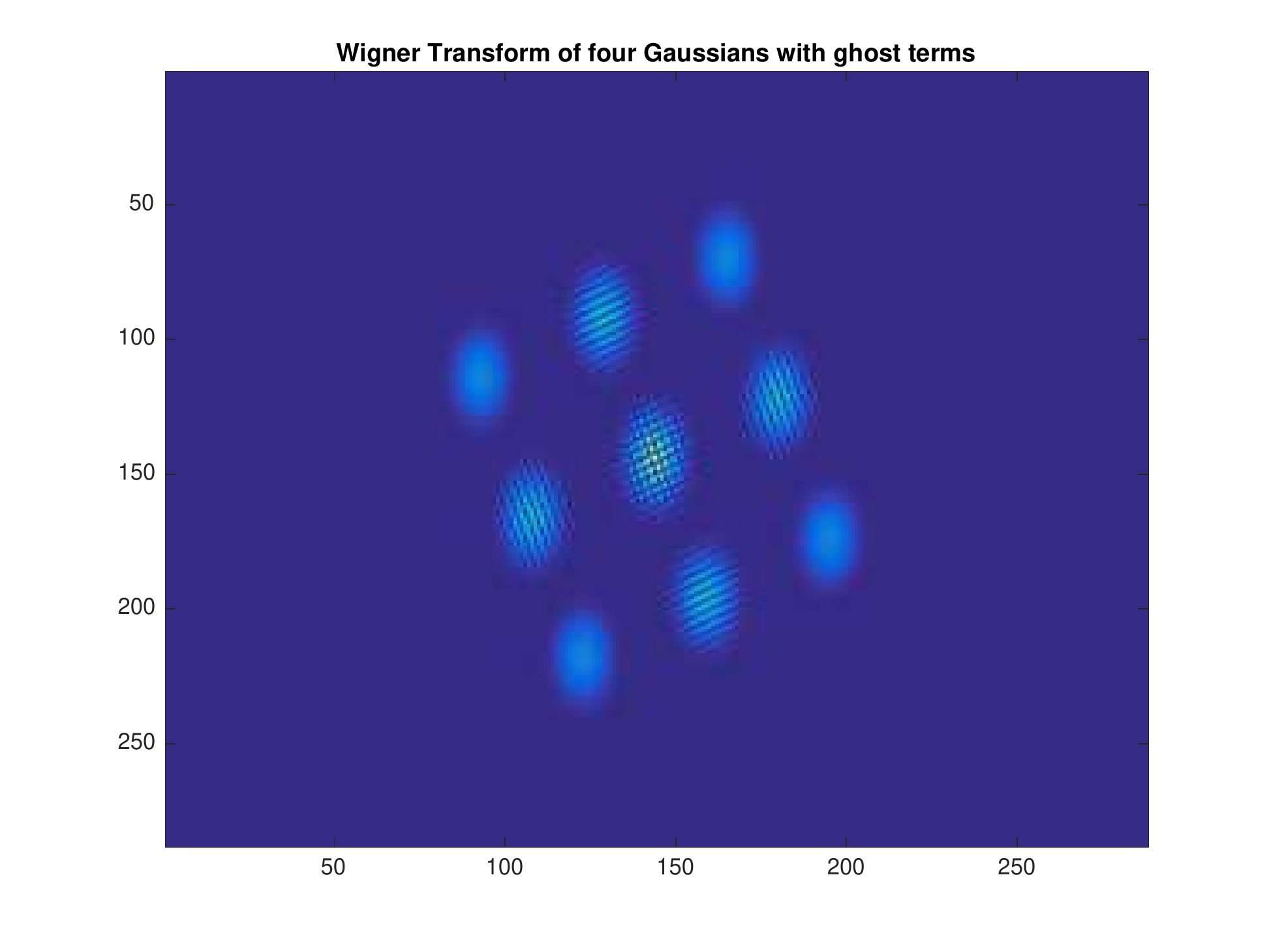}
	\caption{The Wigner Transform shows $9$ spots, only $4$ spots represent the real signal, the others are ghost terms}\label{figWV}
\end{figure}
\begin{figure}
	\includegraphics[width=1.0\textwidth]{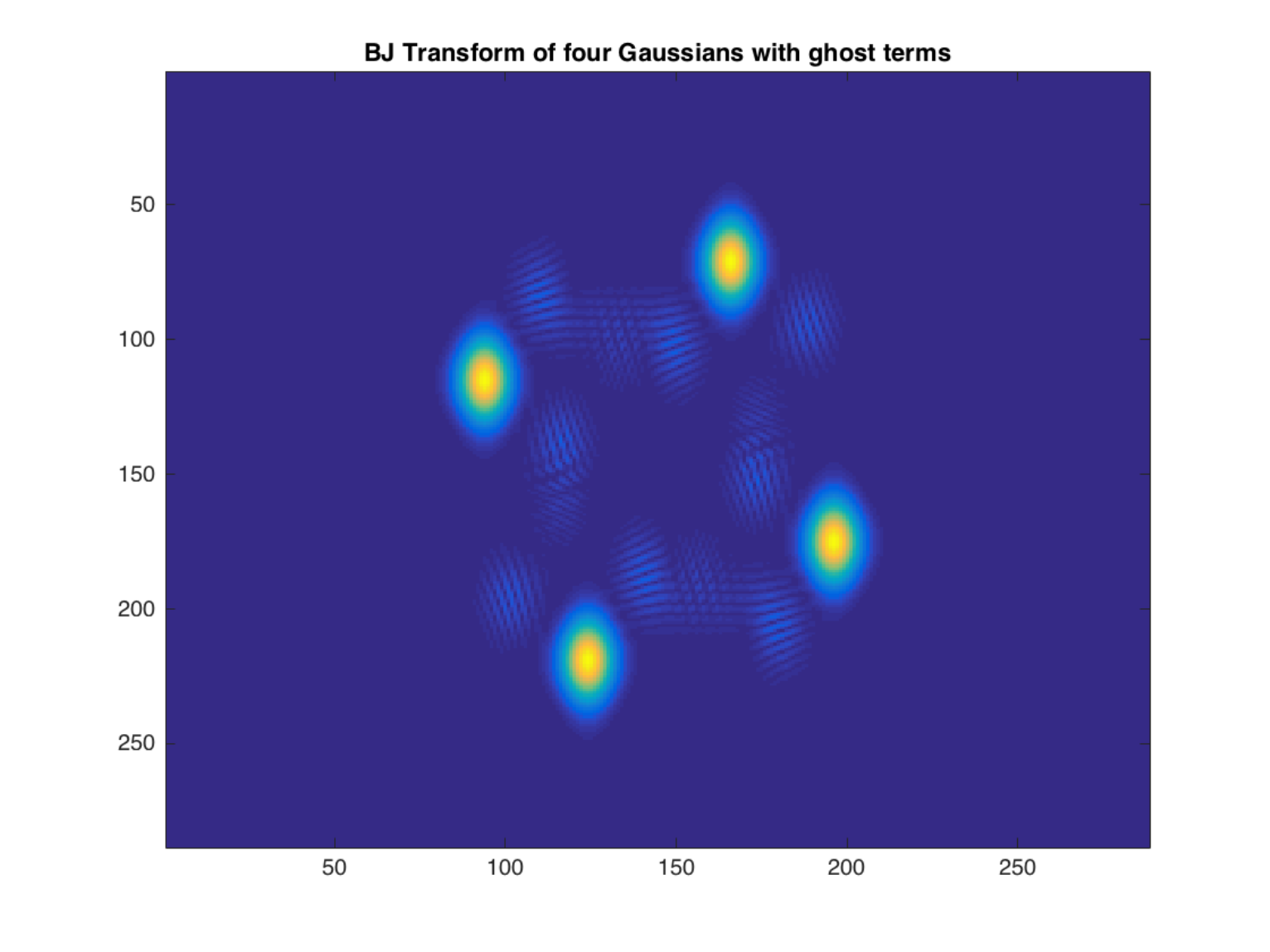}
		\caption{The BJ Transform of the previous signal gives a better information than the Wigner one. Here one sees $4$ spots that represent the time-frequency content of the signal though  few little shades appear as interferences}\label{figBJ}
\end{figure}
\begin{figure}
	\includegraphics[width=1.0\textwidth]{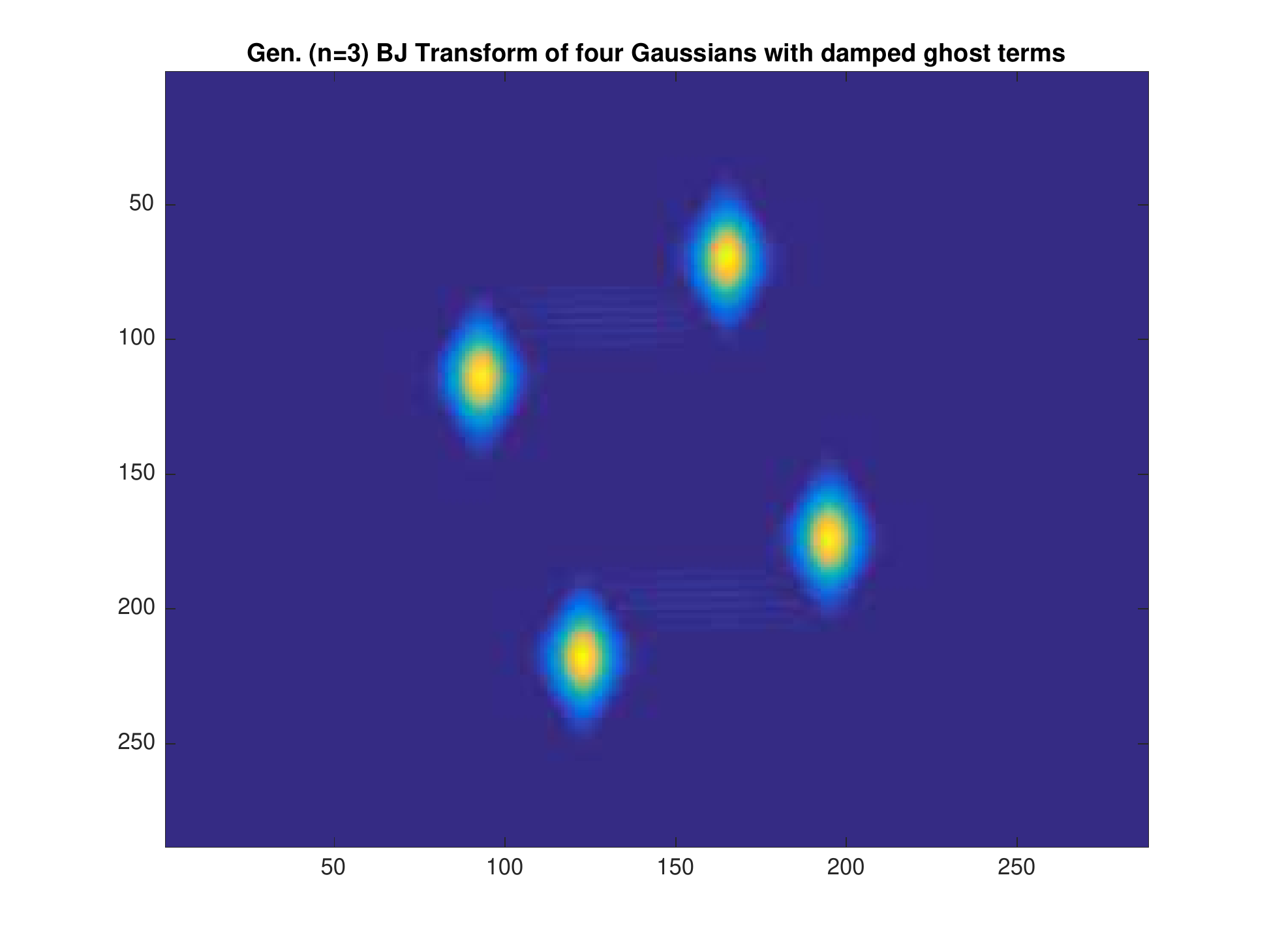}
		\caption{The Generalized BJ Transform (order $n=3$) of the same signal shows the best time-frequency content of it. The interferences are almost disappeared}\label{figBJ3}
\end{figure}
 Figure \ref{fig:GenBJ}  shows a comparison of the Wigner transform, the Born-Jordan transform and the fifth Born-Jordan transform of another sum of  four time-frequency shifts of Gaussian functions. It is clearly visible, that the amount of cross-term suppression increases by applying higher-order smoothing.\\
 
\begin{figure}[htbp]
	\begin{center}
		\hspace{-0.5cm}\includegraphics[width=1.1\textwidth]{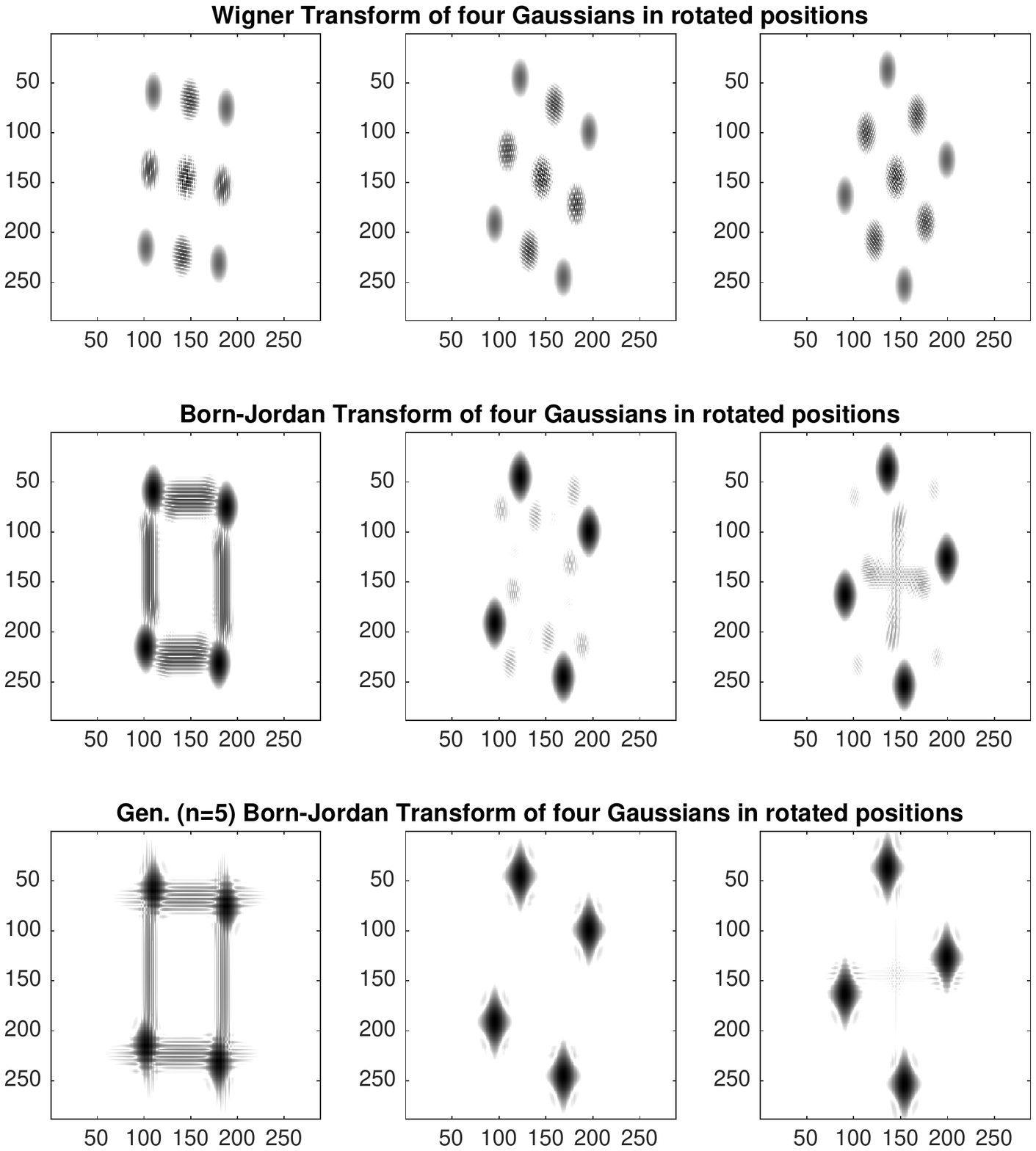}
		\caption{Four Gaussian functions in rotated positions:  Comparison of Wigner distribution, Born-Jordan and generalised Born-Jordan distribution of order $n=5$}
		\label{fig:GenBJ}
	\end{center}
\end{figure}
This suggests that such distributions could be successfully applied in signal processing.\par 
In the following sections we provide a rigorous  mathematical explanation of the 
regularity and smoothness properties of $Q^{n}$; the notion of Fourier Lebesgue wave-front set will play the central role in  showing the damping of interferences of $Q^{n}$ in comparison with the Wigner
	distribution. The use of  function spaces from time-frequency analysis, combined with   microlocal analysis techniques are the key tools of our proofs.

\subsection{Notation}

We use $x\omega=x\cdot\omega=x_{1}\omega_{1}+\ldots+x_{d}\omega_{d}$ for the
scalar product in $\mathbb{R}^{d}$. The brackets  $\langle\cdot,\cdot\rangle$ denote the inner
product in $L^{2}(\mathbb{R}^{d})$ or  the duality pairing between Schwartz
functions and temperate distributions (antilinear in the second argument).
For functions $f,g$, we write $f\lesssim g$ if $f(x)\leq C g(x)$ for every
$x$ and some constant $C>0$, and similarly for $\gtrsim$. The notation
$f\asymp g$ means $f\lesssim g$ and $f\gtrsim g$.
We write $\mathcal{C}^{\infty}_{c}(\mathbb{R}^{d})$ for the class of smooth
functions on $\mathbb{R}^{d}$ with compact support.
The notation  $\sigma$ stands for the standard symplectic form on the phase space
$\mathbb{R}^{2d}\equiv\mathbb{R}^{d}\times\mathbb{R}^{d}$; the phase space
variable is denoted $z=(x,\omega)$ and the dual variable by $\zeta=(\zeta
_{1},\zeta_{2})$. By definition $\sigma(z,\zeta)=Jz\cdot\zeta=\omega\cdot
\zeta_{1}-x\cdot\zeta_{2}$, where
\[
J=%
\begin{pmatrix}
0_{d\times d} & I_{d\times d}\\
-I_{d\times d} & 0_{d\times d}%
\end{pmatrix}
.
\]
The Fourier transform of a function $f$ in $\mathbb{R}^{d}$ is
\[
\mathcal{F} f(\omega)=\widehat{f}(\omega)= \int_{\mathbb{R}^{d}} e^{-2\pi i
	x\omega} f(x)\, dx,
\]
and the symplectic Fourier transform of a function $F$ in the phase space
$\mathbb{R}^{2d}$ is 
\[
\mathcal{F}_{\sigma}F(\zeta)=\int_{{\mathbb{R}^{2d}}} e^{-2\pi{i}\sigma
	(\zeta,z)} F(z)\, dz.
\]
The symplectic Fourier transform is an involution, i.e., $\mathcal{F}_{\sigma
}(\mathcal{F}_{\sigma}F)=F$. Moreover, $\mathcal{F}_{\sigma}F(\zeta)=
\mathcal{F} F(J \zeta)$.

Observe that $\Theta^{n}(J(\zeta_{1},\zeta_{2}))=\Theta^{n}(\zeta_{1}%
,\zeta_{2})$ so that
\begin{equation}
\label{cft}\mathcal{F}_{\sigma}(\Theta^{n})=\mathcal{F} (\Theta^{n}%
),\quad\forall n\in\mathbb{N}_{+}.
\end{equation}
\section{Time-frequency and Microlocal Analysis Methods}
In this section we recall the function spaces from time-frequency analysis and the wave-front set from microlocal analysis that play the key role in this study.\\
\noindent
\textbf{Modulation Spaces.}
Let us first recall another very popular 
time-frequency representation: \emph{the short-time Fourier transform (STFT)}.
Fix a Schwartz function $g\in\mathcal{S}(\mathbb{R}^{d})\setminus\{0\}$ (so-called \textit{window}), then the short-time Fourier transform of $f\in\ cS'(\rd)$ is
\begin{equation}
V_{g}f(x,\omega)=\int_{\mathbb{R}^{d}}f(y)\,{\overline{g(y-x)}}\,e^{-2\pi
	iy\omega}\,dy,\quad(x,\omega)\in{\mathbb{R}^{2d}}.\label{STFTdef}%
\end{equation}

 Consider $1\leq p,q\leq\infty$. The \textit{modulation
space} $M^{p,q}(\mathbb{R}^{d})$ consists of all tempered distributions
$f\in\mathcal{S}^{\prime}(\mathbb{R}^{d}) $ such that
\begin{equation}
\label{defmod}\|f\|_{M^{p,q}}:=\left( \int_{\mathbb{R}^{d}} \left(
\int_{\mathbb{R}^{d}}|V_{g}f(x,\omega) |^{p}\,
dx\right) ^{q/p}d\omega\right) ^{1/q}<\infty\,
\end{equation}
(with obvious modifications for $p=\infty$ or $q=\infty$).   We write $M^{p}(\mathbb{R}^{d})$ instead of $M^{p,p}(\mathbb{R}^{d})$. The modulation  spaces are Banach spaces
for any $1\leq p,q\leq\infty$, and  every non-zero $g\in\mathcal{S}%
(\mathbb{R}^{d})$ yields an equivalent norm in  \eqref{defmod}.

Modulation spaces were introduced in \cite{F1} and are now available in textbooks, see e.g., \cite{grochenig}. They include as special cases several function spaces arising in Harmonic Analysis. In particular for $p=q=2$ we have
\[
M^{2}(\mathbb{R}^{d})=L^{2}(\mathbb{R}^{d}),
\]
whereas $M^{1}(\mathbb{R}^{d})$ is the Feichtinger  algebra $S_{0}(\mathbb{R}^{d})$, cf. \cite{fei0,grochenig}.

In the notation $M^{p,q}$ the
exponent $p$ is a measure of decay at infinity (on average) in the scale of
spaces $\ell^{p}$, whereas the exponent $q$ is a measure of smoothness in the
scale $\mathcal{F} L^{q}$. 

Other instances of modulation spaces, also known as \textit{Wiener amalgam spaces}, are
obtained by exchanging the order of integration in \eqref{defmod}. Namely, for
$p,q\in\lbrack1,\infty)$, 
the modulation space $W(\mathcal{F}L^{p},L^{q})(\mathbb{R}^{d})$ is the subspace of tempered distributions $f\in\mathcal{S}%
^{\prime}(\mathbb{R}^{d})$ such that
\[
\Vert f\Vert_{W(\mathcal{F}L^{p},L^{q})(\mathbb{R}^{d})}:=\left(
\int_{\mathbb{R}^{d}}\left(  \int_{\mathbb{R}^{d}}|V_{g}f(x,\omega
)|^{p}\,d\omega\right)  ^{q/p}dx\right)  ^{1/q}<\infty\,
\]
(with obvious changes for $p=\infty$ or $q=\infty$). Using Parseval identity
in \eqref{STFTdef}, we infer the  fundamental identity of
time-frequency analysis\thinspace\
\[
V_{g}f(x,\omega)=e^{-2\pi ix\omega}V_{\hat{g}}\hat{f}(\omega,-x),
\]
hence
\[
|V_{g}f(x,\omega)|=|V_{\hat{g}}\hat{f}(\omega,-x)|=|\mathcal{F}(\hat
{f}\,T_{\omega}\overline{\hat{g}})(-x)|
\]
so that
\[
\Vert f\Vert_{{M}^{p,q}}=\left(  \int_{\mathbb{R}^{d}}\Vert\hat{f}\ T_{\omega
}\overline{\hat{g}}\Vert_{\mathcal{F}L^{p}}^{q}\ d\omega\right)  ^{1/q}%
=\Vert\hat{f}\Vert_{W(\mathcal{F}L^{p},L^{q})}.
\]
Hence Wiener amalgam spaces can be viewed as the image under
Fourier transform\thinspace\ of modulation spaces: $\mathcal{F}({M}%
^{p,q})=W(\mathcal{F}L^{p},L^{q})$.

We will frequently use the following product property of Wiener amalgam spaces
(\cite[Theorem 1 (v)]{feichtinger80}): For $1\leq p,q\leq\infty$,
\begin{equation}\label{product}
\textit{if $f\in W(\Fur L^1,L^\infty)$ and $g\in W(\Fur L^p,L^q)$ then $fg\in W(\Fur L^p,L^q)$}.
\end{equation}
Taking $p= 1, q=\infty$, we obtain that $W(\mathcal{F} L^{1},L^{\infty
})({\mathbb{R}^{2d}})$ is an algebra under point-wise multiplication.

\begin{proposition}
	\label{c1} Let $1\leq p,q\leq\infty$ and $A\in GL(d,\mathbb{R})$. Then, for
	every $f\in W(\mathcal{F}L^{p},L^{q})(\mathbb{R}^{d})$,
	\begin{equation}
	\label{dilAW0}\|f(A\,\cdot)\|_{W(\mathcal{F}L^{p},L^{q})}\leq C |\det
	A|^{(1/p-1/q-1)}(\det(I+A^{*} A))^{1/2}\|f\|_{W(\mathcal{F}L^{p},L^{q})}.
	\end{equation}
	In particular, for $A=\lambda I$, $\lambda>0$,
	\begin{equation}
	\label{dillambda}\|f(A\,\cdot)\|_{W(\mathcal{F}L^{p},L^{q})}\leq C
	\lambda^{d\left( \frac1p-\frac1q-1\right) }(\lambda^{2}+1)^{d/2}
	\|f\|_{W(\mathcal{F}L^{p},L^{q})}.
	\end{equation}
	
\end{proposition}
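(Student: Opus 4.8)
The plan is to use the covariance of the short-time Fourier transform under the dilation $f\mapsto f(A\cdot)$ and then to revert to a single fixed (Gaussian) window, the price of the window change being exactly the determinant factor.

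First I would record the dilation covariance of the STFT. Setting $g_A:=g(A^{-1}\cdot)$ and performing the substitution $u=Ay$ in the integral \eqref{STFTdef}, one obtains
\[
V_g\bigl(f(A\cdot)\bigr)(x,\omega)=|\det A|^{-1}\,V_{g_A}f\bigl(Ax,(A^{\ast})^{-1}\omega\bigr),\qquad (x,\omega)\in\mathbb{R}^{2d}.
\]
Plugging this into the defining expression for the $W(\mathcal{F}L^p,L^q)$-norm (the analogue of \eqref{defmod} with the order of integration exchanged), and changing variables $\xi=(A^{\ast})^{-1}\omega$ in the inner integral and $u=Ax$ in the outer one, the two Jacobians combine to produce precisely the exponent appearing in \eqref{dilAW0}, namely
\[
\bigl\|f(A\cdot)\bigr\|_{W(\mathcal{F}L^p,L^q)}=|\det A|^{\,1/p-1/q-1}\,\|f\|_{W(\mathcal{F}L^p,L^q),\,g_A},
\]
where the right-hand norm is computed with the dilated window $g_A$ in place of $g$. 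These are routine changes of variables, valid for all $1\le p,q\le\infty$ (with the usual sup-interpretation when $p$ or $q$ is infinite).

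Next I would pass back to a fixed window. Since the $W$-norm is independent of the window up to equivalence, I may take $g$ to be the Gaussian $g(y)=e^{-\pi|y|^2}$ throughout. By the standard change-of-window inequality for the STFT (cf.\ \cite{grochenig}), applied with $\gamma=g$ and $\langle g,g\rangle=\|g\|_2^2\neq0$,
\[
|V_{g_A}f(x,\omega)|\le\frac{1}{\|g\|_2^2}\bigl(|V_gf|\ast|V_{g_A}g|\bigr)(x,\omega),
\]
and Young's inequality for the mixed-norm (amalgam) spaces then gives
\[
\|f\|_{W(\mathcal{F}L^p,L^q),\,g_A}\le\frac{1}{\|g\|_2^2}\,\|V_{g_A}g\|_{L^1(\mathbb{R}^{2d})}\,\|f\|_{W(\mathcal{F}L^p,L^q)}.
\]
Thus everything reduces to estimating the $L^1$-norm of the cross-STFT of the two Gaussians $g$ and $g_A$.

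The crux is this last Gaussian computation, which is where the determinant factor is born. With $g$ Gaussian, $g_A$ is again a generalized Gaussian with $g_A(y)=e^{-\pi\langle(AA^{\ast})^{-1}y,y\rangle}$, and the cross-STFT of two Gaussians is, up to a unimodular phase, a Gaussian on phase space; completing the square in the defining integral yields
\[
|V_{g_A}g(x,\omega)|=(\det M)^{-1/2}\,e^{-\pi\langle M_1M^{-1}M_2\,x,\,x\rangle}\,e^{-\pi\langle M^{-1}\omega,\,\omega\rangle},
\]
with $M_1=(AA^{\ast})^{-1}$, $M_2=I$ and $M=M_1+M_2$. Here the quadratic form in $x$ is governed by the parallel sum $M_1M^{-1}M_2=(M_1^{-1}+M_2^{-1})^{-1}$, which is symmetric positive definite, so both Gaussians are integrable; carrying out the two $\mathbb{R}^d$-integrations gives
\[
\|V_{g_A}g\|_{L^1}=\left(\frac{\det(M_1+M_2)}{\det M_1\,\det M_2}\right)^{1/2}=\bigl(\det(I+AA^{\ast})\bigr)^{1/2}=\bigl(\det(I+A^{\ast}A)\bigr)^{1/2},
\]
the last equality by the Sylvester determinant identity $\det(I+AA^{\ast})=\det(I+A^{\ast}A)$. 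Combining the three displayed estimates yields \eqref{dilAW0} with $C=\|g\|_2^{-2}$ (and a harmless extra constant if a non-Gaussian window is used), and specializing to $A=\lambda I$, $\lambda>0$, gives \eqref{dillambda}. The only genuine work is the phase-space Gaussian integral above; the covariance and the change-of-window steps are standard.
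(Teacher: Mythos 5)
The paper states Proposition \ref{c1} without proof (it is quoted from the literature, essentially \cite{CNJFA2008}), so there is no in-paper argument to compare against; judged on its own, your proof is correct and is in fact the standard route to this estimate. The three steps all check out: the STFT covariance $V_g(f(A\cdot))(x,\omega)=|\det A|^{-1}V_{g_A}f(Ax,(A^*)^{-1}\omega)$ and the change of variables in the mixed $L^q_x(L^p_\omega)$ norm produce exactly the factor $|\det A|^{1/p-1/q-1}$; the change-of-window inequality plus Young's inequality for mixed-norm spaces reduce everything to $\|V_{g_A}g\|_{L^1}$; and the phase-space Gaussian integral gives $\bigl(\det(M_1+M_2)/(\det M_1\det M_2)\bigr)^{1/2}=(\det(I+A^*A))^{1/2}$ via Sylvester's identity, which is precisely the constant in \eqref{dilAW0}. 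Specializing $A=\lambda I$ then yields \eqref{dillambda}, so the argument is complete.
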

In the sequel we shall use the following fact \cite[Lemma 5.1]{ACHA2018}.
\begin{lemma}
	\label{lemma5.1}  Let $\chi\in C^{\infty}_{c}(\mathbb{R})$. Then the function
	$\chi(\zeta_{1} \zeta_{2})$ belongs to $W(\mathcal{F} L^{1},L^{\infty
	})({\mathbb{R}^{2d}})$.
\end{lemma}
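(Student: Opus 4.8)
The plan is to exploit the fact that $\chi$, being smooth and compactly supported, is the inverse Fourier transform of a Schwartz function, and thereby to write $\chi(\zeta_1\zeta_2)$ as a superposition of \emph{chirps} whose amplitudes decay rapidly. Concretely, writing $\chi(\tau)=\int_{\mathbb{R}}\widehat{\chi}(\xi)\,e^{2\pi i\tau\xi}\,d\xi$ with $\widehat{\chi}\in\mathcal{S}(\mathbb{R})$, one obtains the representation
\[
\chi(\zeta_1\cdot\zeta_2)=\int_{\mathbb{R}}\widehat{\chi}(\xi)\,u_\xi(\zeta)\,d\xi,\qquad u_\xi(\zeta):=e^{2\pi i\xi\,\zeta_1\cdot\zeta_2},\quad \zeta=(\zeta_1,\zeta_2)\in\mathbb{R}^{2d}.
\]
By Minkowski's integral inequality for the Banach-space-valued integral, it then suffices to bound $\|u_\xi\|_{W(\mathcal{F}L^1,L^\infty)}$ by a fixed power of $\xi$ and to invoke the rapid decay of $\widehat{\chi}$, so that $\int_{\mathbb{R}}|\widehat{\chi}(\xi)|\,\|u_\xi\|_{W(\mathcal{F}L^1,L^\infty)}\,d\xi<\infty$.

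The heart of the argument is therefore the estimate on the pure chirp $u_\xi$, and the first step is to reduce it to a one-dimensional model. The orthogonal change of variables $\eta_1=(\zeta_1+\zeta_2)/\sqrt2$, $\eta_2=(\zeta_1-\zeta_2)/\sqrt2$ diagonalises the bilinear form, $\zeta_1\cdot\zeta_2=\tfrac12(|\eta_1|^2-|\eta_2|^2)$, so that $u_\xi=v_\xi\circ A$ with $A\in O(2d)$ and $v_\xi(\eta)=\prod_{j=1}^d e^{\pi i\xi\eta_{1,j}^2}\prod_{j=1}^d e^{-\pi i\xi\eta_{2,j}^2}$ a tensor product of $2d$ one-dimensional chirps. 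Since $A$ is orthogonal, Proposition \ref{c1} (with $p=1$, $q=\infty$, so that $|\det A|^{1/p-1/q-1}=1$ and $\det(I+A^\ast A)^{1/2}=2^{d}$) shows that precomposition with $A$ is bounded on $W(\mathcal{F}L^1,L^\infty)$; moreover, with a Gaussian window $G=\varphi\otimes\cdots\otimes\varphi$ the amalgam norm factorizes over tensor products, because $|V_GF|=\prod_j|V_\varphi F_j|$ and both the inner $L^1$-integral in $\omega$ and the outer supremum in $x$ split. This reduces the whole estimate to the scalar chirp $c_\lambda(t)=e^{\pi i\lambda t^2}$.

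For $c_\lambda$ I would compute the short-time Fourier transform against the Gaussian $\varphi(t)=e^{-\pi t^2}$ explicitly; the resulting Gaussian integral gives
\[
|V_\varphi c_\lambda(x,\omega)|=(1+\lambda^2)^{-1/4}\exp\!\Big(-\pi\frac{(\lambda x-\omega)^2}{1+\lambda^2}\Big),
\]
whence, for each fixed $x$, $\int_{\mathbb{R}}|V_\varphi c_\lambda(x,\omega)|\,d\omega=(1+\lambda^2)^{1/4}$, a quantity independent of $x$. Thus $\|c_\lambda\|_{W(\mathcal{F}L^1,L^\infty)}=(1+\lambda^2)^{1/4}$. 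Feeding this back through the factorization and Proposition \ref{c1}, one gets $\|u_\xi\|_{W(\mathcal{F}L^1,L^\infty)}\le C\,(1+\xi^2)^{d/2}$, and the Schwartz decay of $\widehat{\chi}$ closes the argument.

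The main obstacle is precisely this polynomial control of the chirp norm, and in particular its \emph{uniformity in $x$}: since the $W(\mathcal{F}L^1,L^\infty)$-norm equals $\sup_x\int_\omega|V_\varphi(\cdot)(x,\omega)|\,d\omega$, one needs the $\omega$-mass of the STFT to stay bounded uniformly in the time variable, which is exactly what the explicit computation delivers, the $x$-dependence disappearing after integrating in $\omega$. The remaining points—justifying the vector-valued integral and the factorization of the amalgam norm over tensor products with a Gaussian window—are routine.
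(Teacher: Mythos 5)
Your proof is correct, and it follows the same overall strategy as the proof this paper relies on — note that the paper itself does not prove Lemma \ref{lemma5.1} but cites it from \cite[Lemma 5.1]{ACHA2018}, whose argument is exactly your first step: Fourier inversion writes $\chi(\zeta_1\zeta_2)=\int_{\mathbb{R}}\widehat{\chi}(\xi)\,e^{2\pi i \xi\zeta_1\cdot\zeta_2}\,d\xi$, Minkowski's integral inequality reduces everything to a polynomially growing bound on the chirp norm, and the rapid decay of $\widehat{\chi}$ closes the argument. Where you diverge is in how that chirp bound is obtained. The cited proof (and the toolkit this survey assembles for the purpose) treats the membership $e^{\pm 2\pi i\zeta_1\zeta_2}\in W(\mathcal{F}L^1,L^\infty)$ as a known fact — Proposition \ref{pro1} and Corollary \ref{cor1} here — and then rescales: $e^{2\pi i\xi\zeta_1\cdot\zeta_2}=F(\sqrt{|\xi|}\,\zeta)$ (or $F_J(\sqrt{|\xi|}\,\zeta)$ for $\xi<0$), so the dilation estimate \eqref{dillambda} with $p=1$, $q=\infty$ immediately gives a bound $C(1+|\xi|)^{d}$, which is all one needs. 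You instead re-derive the chirp estimate from scratch: diagonalising $\zeta_1\cdot\zeta_2$ by an orthogonal change of variables, tensorising the amalgam norm with a Gaussian window, and computing $|V_\varphi c_\lambda|$ explicitly — your Gaussian computation is correct, including the value $(1+\lambda^2)^{1/4}$ of the $\omega$-integral uniformly in $x$, and it yields the same polynomial order $(1+\xi^2)^{d/2}\asymp(1+|\xi|)^d$. The trade-off: the paper's route is shorter because Propositions \ref{pro1} and \ref{c1} are already on the table, while yours is self-contained (it effectively reproves Proposition \ref{pro1} along the way) and makes the uniformity in the time variable — the real content of the $L^\infty$ component of the amalgam norm — completely explicit.
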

\noindent
\textbf{Wave-front set for Fourier-Lebesgue spaces} 
For $s\in\mathbb{R}$ the  Sobolev space $H^{s}(\mathbb{R}^{d})$
is constituted by the distributions $f\in\mathcal{S}^{\prime}(\mathbb{R}^{d})$
such that
\begin{equation}
\label{normhs}\|f\|_{H^{s}}:=\| \widehat{f}(\omega) \langle\omega\rangle^{s}
\|_{L^{2}}<\infty.
\end{equation}
The  $H^{s}$ wave-front set allows to quantify the regularity of a
function/distribution in the Sobolev scale, at any given point and direction. This is achieved by microlocalizing the definition of the $H^{s}$ norm
in \eqref{normhs} as follows (cf.\ \cite[Chapter XIII]{hormander2}).

\begin{definition} \label{defhswave} Given a tempered distribution $f\in\mathcal{S}^{\prime}(\mathbb{R}^{d})$ its wave-front set $WF_{H^{s}} (f)\subset\mathbb{R}^{d}\times(\mathbb{R}%
^{d}\setminus\{0\})$ is  the set of points $({x}_{0},{\omega}_{0}%
)\in\mathbb{R}^{d}\times\mathbb{R}^{d}$, ${\omega}_{0}\not =0$, where the
following condition is \textit{not} satisfied: for some cut-off function
$\varphi\in C^{\infty}_{c}(\mathbb{R}^{d})$ with $\varphi({x}_{0})\not =0$ and
some open conic neighborhood of $\Gamma\subset\mathbb{R}^{d}\setminus\{0\}$ of
${\omega}_{0}$ we have
\[
\|\mathcal{F} [\varphi f](\omega) \langle\omega\rangle^{s}\|_{L^{2}(\Gamma
	)}<\infty.
\]
\end{definition}
More  generally one can start from the Fourier-Lebesgue spaces $\mathcal{F}
L^{q}_{s}(\mathbb{R}^{d})$, $s\in\mathbb{R}$, $1\leq q\leq\infty$, which is
the space of distributions $f\in\mathcal{S}^{\prime}(\mathbb{R}^{d})$ such
that the norm 
\begin{equation}
\label{eq4-0}\|f\|_{\mathcal{F} L^{q}_{s}(\mathbb{R}^{d})}=\|\widehat{f}%
(\omega) \langle\omega\rangle^{s}\|_{L^{q}(\mathbb{R}^{d})},
\end{equation}
 is finite. 
 
 Arguing exactly as in Definition \ref{defhswave} with the
space $L^{2}$ replaced by $L^{q}$, one can introduce  the
corresponding notion of wave-front set $WF_{\mathcal{F} L^{q}_{s}}(f)$.
\begin{definition}
Given $f\in\mathcal{S}^{\prime}(\mathbb{R}^{d})$
its wave-front set $WF_{\mathcal{F} L^{q}_{s}} (f)\subset\mathbb{R}^{d}%
\times(\mathbb{R}^{d}\setminus\{0\})$ is the set of points $({x}_{0},{\omega
}_{0})\in\mathbb{R}^{d}\times\mathbb{R}^{d}$, ${\omega}_{0}\not =0$, where the
following condition \textit{is not satisfied}: for some cut-off function
$\varphi$ (i.e., $\varphi$ is smooth and compactly supported on $\mathbb{R}%
^{d}$), with $\varphi({x}_{0})\not =0$, and some open conic neighbourhood
$\Gamma\subset\mathbb{R}^{d}\setminus\{0\}$ of ${\omega}_{0}$ it holds
\begin{equation}
\label{eq4}\|\mathcal{F} [\varphi f](\omega) \langle\omega\rangle^{s}%
\|_{L^{q}(\Gamma)}<\infty.
\end{equation}
\end{definition}
Observe that $WF_{\mathcal{F} L^{2}_{s}} (f)=WF_{H^{s}}(f)$ is the standard
$H^{s}$ wave-front set in Definition \ref{defhswave}.

For our purposes we recall some basic results about the action of constant coefficient linear partial differential operators on such wave-front set (cf. \cite{ptt1}).
Given the operator
\[
P=\sum_{|\alpha|\leq m} c_{\alpha}\partial^{\alpha},\quad c_{\alpha}%
\in\mathbb{C},
\]
 for $1\leq q\leq\infty$, $s\in\mathbb{R}$,
$f\in\mathcal{S}^{\prime}(\mathbb{R}^{d})$,
\[
WF_{\mathcal{F} L^{q}_{s}}(Pf) \subset WF_{\mathcal{F} L^{q}_{s+m}}(f).
\]
Consider now the inverse inclusion. We say that $\zeta\in\mathbb{R}^{d}$,
$\zeta\not =0$, is non characteristic for the operator $P$ if
\[
\sum_{|\alpha|=m} c_{\alpha}\zeta^{\alpha}\not =0.
\]
 The following result is a microlocal version of the classical regularity result of elliptic
operators (see \cite[Corollary 1 (2)]{ptt1}):
\begin{proposition}
	\label{pro3} Let $1\leq q\leq\infty$, $s\in\mathbb{R}$ and $f\in
	\mathcal{S}^{\prime}(\mathbb{R}^{d})$. Let $z\in\mathbb{R}^{d}$ and assume
	that $\zeta\in\mathbb{R}^{d}\setminus\{0\}$ is non characteristic for $P$.
	Then, if $(z,\zeta)\not \in WF_{\mathcal{F} L^{q}_{s}}(Pf)$, we have
	$(z,\zeta)\not \in WF_{\mathcal{F} L^{q}_{s+m}}(f)$.
\end{proposition}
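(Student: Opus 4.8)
The plan is to combine a frequency-side ellipticity estimate with a bootstrap on the regularity index, using the inclusion $WF_{\mathcal{F} L^{q}_{t}}(P'f)\subset WF_{\mathcal{F} L^{q}_{t+m'}}(f)$ recalled above to absorb the commutator terms. Write $p(\omega)=\sum_{|\alpha|\le m}c_{\alpha}(2\pi i\omega)^{\alpha}$, so that $\widehat{Pf}=p\,\hat f$, and let $p_{m}$ be its top-order part. The non-characteristic hypothesis is exactly $p_{m}(\zeta)\neq0$; since $p_{m}$ is homogeneous of degree $m$, by continuity there are an open cone $\Gamma_{0}\ni\zeta$ and constants $c,R>0$ with
\[
|p(\omega)|\ge c\langle\omega\rangle^{m},\qquad \omega\in\Gamma_{0},\ |\omega|\ge R,
\]
the lower-order part of $p$ being harmless once $|\omega|\ge R$. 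For any $\psi\in C^{\infty}_{c}(\mathbb{R}^{d})$ with small support near $z$ and any cone $\Gamma\ni\zeta$ with $\overline{\Gamma}\setminus\{0\}\subset\Gamma_{0}$, dividing the identity $p\,\widehat{\psi f}=\widehat{P(\psi f)}$ by $p$ on $\Gamma\cap\{|\omega|\ge R\}$ yields the microlocal elliptic inequality
\[
\langle\omega\rangle^{s+m}|\widehat{\psi f}(\omega)|\lesssim \langle\omega\rangle^{s}\big(|\widehat{\psi\,Pf}(\omega)|+|\widehat{[P,\psi]f}(\omega)|\big).
\]
On the complementary bounded set $\{|\omega|<R\}$ the left-hand side is harmless: $\psi f$ is a compactly supported distribution, so $\widehat{\psi f}$ is continuous of polynomial growth (Paley--Wiener--Schwartz) and its $L^{q}$ contribution over a bounded region is finite.

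It then remains to show that both terms on the right lie in $L^{q}(\Gamma)$. The first is governed by the hypothesis $(z,\zeta)\notin WF_{\mathcal{F} L^{q}_{s}}(Pf)$: choosing $\operatorname{supp}\psi$ inside the set where the cut-off witnessing this hypothesis is non-zero, the cut-off independence of the wave-front set (the convolution identity $\widehat{\psi u}=\hat\psi\ast\hat u$, which preserves microlocal $\mathcal{F}L^{q}$-regularity on slightly smaller cones) gives $\langle\omega\rangle^{s}\widehat{\psi\,Pf}\in L^{q}(\Gamma)$. The commutator $[P,\psi]=P(\psi\,\cdot)-\psi P$ is a differential operator of order $m-1$, so $[P,\psi]f$ is a finite sum of terms $(\partial^{\beta}\psi)\,\partial^{\gamma}f$ with $\beta\neq0$ and $|\gamma|\le m-1$. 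If we already know $(z,\zeta)\notin WF_{\mathcal{F} L^{q}_{\sigma}}(f)$ for some $\sigma$, then the recalled inclusion applied to each $\partial^{\gamma}$ gives $(z,\zeta)\notin WF_{\mathcal{F} L^{q}_{\sigma-|\gamma|}}(\partial^{\gamma}f)$, and multiplication by the smooth compactly supported factor $\partial^{\beta}\psi$ preserves this; summing, $(z,\zeta)\notin WF_{\mathcal{F} L^{q}_{\sigma-(m-1)}}([P,\psi]f)$. Feeding this into the inequality, the right-hand side belongs to $L^{q}$ at level $\min(s,\sigma-(m-1))$, whence $(z,\zeta)\notin WF_{\mathcal{F} L^{q}_{\min(s+m,\,\sigma+1)}}(f)$.

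This last implication is the engine of a bootstrap that gains one derivative at a time: from the a priori regularity $\sigma$ we pass to $\sigma+1$, and as soon as $\sigma\ge s+m-1$ the next step reaches the target index $s+m$. A starting point is supplied once more by Paley--Wiener--Schwartz: for $\psi$ as above one has $|\widehat{\psi f}(\omega)|\lesssim\langle\omega\rangle^{N}$ for some $N$, so $\langle\omega\rangle^{\sigma_{0}}\widehat{\psi f}\in L^{q}$ whenever $\sigma_{0}<-N-d/q$ (the case $q=\infty$ being analogous), i.e. $(z,\zeta)\notin WF_{\mathcal{F} L^{q}_{\sigma_{0}}}(f)$. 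Iterating the one-derivative gain finitely many times, and shrinking $\operatorname{supp}\psi$ and the cone $\Gamma$ slightly at each step (permissible, since only finitely many steps occur), yields $(z,\zeta)\notin WF_{\mathcal{F} L^{q}_{s+m}}(f)$, as claimed. The main obstacle is the second paragraph: one must make rigorous the cut-off independence and the behaviour of microlocal $\mathcal{F}L^{q}$-regularity under multiplication by smooth compactly supported functions, that is the convolution lemma on cones, and track the constants through the finitely many bootstrap steps; by comparison the ellipticity estimate and the low-frequency bookkeeping are routine.
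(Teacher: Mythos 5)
You should first be aware that the paper does not prove Proposition \ref{pro3} at all: it is imported verbatim from the literature, namely \cite[Corollary 1 (2)]{ptt1}, so there is no internal argument to compare yours against. Your proposal reconstructs the classical microlocal elliptic bootstrap, which is indeed the approach behind results of this type (and in essence what the cited reference carries out): ellipticity of the full symbol $p$ on a conic neighbourhood of $\zeta$ for $|\omega|\geq R$, the identity $p\,\widehat{\psi f}=\widehat{\psi\,Pf}+\widehat{[P,\psi]f}$, the observation that $[P,\psi]$ is a differential operator of order $m-1$ with $C^{\infty}_{c}$ coefficients, a Paley--Wiener--Schwartz starting point $\sigma_{0}<-N-d/q$, and a finite iteration $\sigma\mapsto\min(s+m,\sigma+1)$ with cut-offs and cones shrunk at each of the finitely many steps. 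The arithmetic of the bootstrap is correct, as is the disposal of the bounded frequency region $\{|\omega|<R\}$.

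The one genuine gap --- which you flag honestly but do not close --- is the lemma that microlocal $\mathcal{F}L^{q}_{s}$-regularity survives multiplication by a $C^{\infty}_{c}$ function when passing to a slightly smaller cone. Your argument invokes it twice: once to replace the unknown cut-off witnessing the hypothesis on $Pf$ by your own $\psi$, and once per bootstrap step to handle the factors $\partial^{\beta}\psi$ in the commutator; without it the proof does not run. The lemma is true and its proof is the standard cone-splitting estimate, which you should supply: for a compactly supported distribution $u$ with $\langle\cdot\rangle^{s}\hat{u}\in L^{q}(\Gamma')$ and $|\hat u(\eta)|\lesssim\langle\eta\rangle^{N}$, and for $\chi\in C^{\infty}_{c}$, write $\widehat{\chi u}(\omega)=\int\hat{\chi}(\omega-\eta)\hat{u}(\eta)\,d\eta$ and split the integral according to whether $\eta\in\Gamma'$. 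On the part $\eta\in\Gamma'$, use $\langle\omega\rangle^{s}\lesssim\langle\omega-\eta\rangle^{|s|}\langle\eta\rangle^{s}$ and Young's inequality, since $\langle\cdot\rangle^{|s|}\hat{\chi}\in L^{1}$. On the part $\eta\notin\Gamma'$, use that for $\omega$ in a cone $\Gamma$ with $\overline{\Gamma}\setminus\{0\}\subset\Gamma'$ one has $|\omega-\eta|\geq c(|\omega|+|\eta|)$, so the rapid decay of $\hat{\chi}$ dominates the polynomial growth $\langle\eta\rangle^{N}$ and yields arbitrary decay in $\omega$, hence integrability against $\langle\omega\rangle^{s}$ in $L^{q}(\Gamma)$. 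With this lemma inserted, your proof is complete and coincides in substance with the argument of \cite{ptt1}.
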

\section{Time-frequency Analysis of the nth Born-Jordan kernel}
In this section we summarize the main results of the topic, obtained in the papers \cite{cgnb,ACHA2018,BJsplines2019}. 
The Born-Jordan kernel $\Theta^{1}$ in \eqref{sincxp} satisfies 
$$\Theta^{1}\in W(\mathcal{F} L^{1}, L^{\infty})({\mathbb{R}^{2d}}),$$
cf. \cite{ACHA2018}.

The previous property is true for any $\Theta^{n}$, $n\in\mathbb{N}_{+}$, as
shown below (see \cite{BJsplines2019}).
\begin{proposition}
	\label{pron}  For $n\in\mathbb{N}_{+}$, the function $\Theta^{n}$ in 
	\eqref{nCohenkerneln} belongs to the Wiener algebra $W(\mathcal{F}
	L^{1},L^{\infty})({\mathbb{R}^{2d}})$.
\end{proposition}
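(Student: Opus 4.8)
The plan is to exploit the multiplicative structure of both the kernels and the target space, reducing everything to the case $n=1$ which is already recorded. The crucial elementary observation is that, since $\mathrm{sinc}^{n}(\xi)=\bigl(\mathrm{sinc}(\xi)\bigr)^{n}$ (consistent with \eqref{Fouriersplines}, as $\mathcal{F}B_{n}=(\mathcal{F}B_{1})^{n}$), we have the pointwise identity
\begin{equation*}
\Theta^{n}=(\Theta^{1})^{n}\qquad\text{on }{\mathbb{R}^{2d}}.
\end{equation*}
Thus $\Theta^{n}$ is an $n$-fold pointwise product of the single factor $\Theta^{1}$, and the whole question reduces to controlling products in $W(\mathcal{F}L^{1},L^{\infty})({\mathbb{R}^{2d}})$.

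Second, I would invoke the product property \eqref{product} in the special case $p=1$, $q=\infty$, which is precisely the statement that $W(\mathcal{F}L^{1},L^{\infty})({\mathbb{R}^{2d}})$ is an algebra under pointwise multiplication. Combined with the already-recorded fact that $\Theta^{1}\in W(\mathcal{F}L^{1},L^{\infty})({\mathbb{R}^{2d}})$ (cf.\ \cite{ACHA2018}), this yields the conclusion by a one-line induction on $n$: assuming $(\Theta^{1})^{n-1}$ lies in the algebra, the product $(\Theta^{1})^{n-1}\cdot\Theta^{1}=\Theta^{n}$ lies there as well, with a submultiplicative norm bound $\|\Theta^{n}\|_{W(\mathcal{F}L^{1},L^{\infty})}\lesssim\|\Theta^{1}\|_{W(\mathcal{F}L^{1},L^{\infty})}^{\,n}$.

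Given these inputs there is essentially no obstacle: the entire content is packed into the cited membership $\Theta^{1}\in W(\mathcal{F}L^{1},L^{\infty})$ and the algebra structure. If instead one wanted a self-contained argument resting only on Lemma \ref{lemma5.1}, the natural route would be to split $\mathrm{sinc}^{n}=\chi\,\mathrm{sinc}^{n}+(1-\chi)\,\mathrm{sinc}^{n}$ with $\chi\in C^{\infty}_{c}(\mathbb{R})$ equal to $1$ near the origin. The first summand is smooth and compactly supported, so Lemma \ref{lemma5.1} applies verbatim to $\chi(x\omega)\,\mathrm{sinc}^{n}(x\omega)$. The genuinely delicate step would then be the tail $(1-\chi)\,\mathrm{sinc}^{n}$, which is not compactly supported and decays only like $|\xi|^{-n}$ while carrying the oscillation $\sin^{n}(\pi\xi)$; estimating the Wiener-amalgam norm of its composition with the bilinear map $(x,\omega)\mapsto x\omega$ requires a more careful analysis. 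The algebra argument sketched above bypasses this difficulty entirely, which is why I would adopt it.
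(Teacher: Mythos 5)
Your proof is correct and is essentially the paper's own argument: the note assembles exactly the ingredients you use --- the membership $\Theta^{1}\in W(\mathcal{F}L^{1},L^{\infty})({\mathbb{R}^{2d}})$ from \cite{ACHA2018} and the algebra property of $W(\mathcal{F}L^{1},L^{\infty})$ obtained from the product property \eqref{product} with $p=1$, $q=\infty$ --- and the proof given in the cited source \cite{BJsplines2019} is precisely the factorization $\Theta^{n}=(\Theta^{1})^{n}$ combined with induction on pointwise products. Your closing sketch of a self-contained cutoff decomposition correctly identifies why that route is more delicate, but it is not needed.
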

The properties of the kernels above yield the following result. 
\begin{theorem}
	\label{teo2-zero} Let $f\in\mathcal{S}^{\prime}(\mathbb{R}^{d})$ be a signal,
	with $Wf\in M^{p,q}({\mathbb{R}^{2d}})$ for some $1\leq p,q\leq\infty$. Then
	$Q^{n}f\in M^{p,q}({\mathbb{R}^{2d}})$, for every $n\in\mathbb{N}_{+}$.
\end{theorem}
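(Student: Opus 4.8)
The plan is to move everything to the Fourier transform side, where the convolution defining $Q^{n}f$ becomes a pointwise product, and then to close the argument with the module property \eqref{product} of Wiener amalgam spaces together with Proposition \ref{pron}. The only structural facts needed are the identity $\mathcal{F}(M^{p,q})=W(\mathcal{F}L^{p},L^{q})$ recalled above and the relation \eqref{cft}.

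First I would translate both the hypothesis and the conclusion through $\mathcal{F}(M^{p,q})=W(\mathcal{F}L^{p},L^{q})$: a tempered distribution $u$ on $\mathbb{R}^{2d}$ belongs to $M^{p,q}(\mathbb{R}^{2d})$ if and only if $\mathcal{F}u\in W(\mathcal{F}L^{p},L^{q})(\mathbb{R}^{2d})$. Thus the assumption $Wf\in M^{p,q}$ reads $\mathcal{F}(Wf)\in W(\mathcal{F}L^{p},L^{q})$, and it suffices to prove $\mathcal{F}(Q^{n}f)\in W(\mathcal{F}L^{p},L^{q})$.

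Next I would compute $\mathcal{F}(Q^{n}f)$. Since $Q^{n}f=Wf\ast\mathcal{F}_{\sigma}(\Theta^{n})$ is an ordinary convolution on $\mathbb{R}^{2d}$, the convolution theorem gives $\mathcal{F}(Q^{n}f)=\mathcal{F}(Wf)\cdot\mathcal{F}(\mathcal{F}_{\sigma}(\Theta^{n}))$. By \eqref{cft}, $\mathcal{F}_{\sigma}(\Theta^{n})=\mathcal{F}(\Theta^{n})$, and since $\Theta^{n}$ is even on $\mathbb{R}^{2d}$ (indeed $\mathrm{sinc}^{n}((-x)(-\omega))=\mathrm{sinc}^{n}(x\omega)$), the reflection formula $(\mathcal{F}^{2}g)(z)=g(-z)$ yields $\mathcal{F}(\mathcal{F}_{\sigma}(\Theta^{n}))=\mathcal{F}^{2}(\Theta^{n})=\Theta^{n}$. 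Hence $\mathcal{F}(Q^{n}f)=\Theta^{n}\cdot\mathcal{F}(Wf)$.

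Finally I would invoke the module property. By Proposition \ref{pron}, $\Theta^{n}\in W(\mathcal{F}L^{1},L^{\infty})(\mathbb{R}^{2d})$, while $\mathcal{F}(Wf)\in W(\mathcal{F}L^{p},L^{q})(\mathbb{R}^{2d})$ by the first step, so \eqref{product} gives $\Theta^{n}\cdot\mathcal{F}(Wf)\in W(\mathcal{F}L^{p},L^{q})$, that is $\mathcal{F}(Q^{n}f)\in W(\mathcal{F}L^{p},L^{q})$, which is equivalent to $Q^{n}f\in M^{p,q}$. The only point requiring care is the rigorous justification of the convolution theorem at the level of tempered distributions; this is harmless here because $\Theta^{n}\in W(\mathcal{F}L^{1},L^{\infty})\hookrightarrow L^{\infty}$ is a bounded continuous function, so multiplication by $\Theta^{n}$ is a well-defined continuous operation on $W(\mathcal{F}L^{p},L^{q})$, and one may equally take $\mathcal{F}(Q^{n}f):=\Theta^{n}\cdot\mathcal{F}(Wf)$ as the meaning of \eqref{e17}. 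I therefore expect no serious analytic obstacle, the core of the argument being the short chain of identities of the third paragraph.
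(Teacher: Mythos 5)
Your proof is correct and follows essentially the same route as the paper: pass to the Fourier side, where the convolution defining $Q^{n}f$ becomes pointwise multiplication by $\Theta^{n}$, and conclude via Proposition \ref{pron} together with the product property \eqref{product} and the identity $\mathcal{F}(M^{p,q})=W(\mathcal{F}L^{p},L^{q})$. The only (harmless) deviation is that the paper applies the symplectic Fourier transform and invokes Proposition \ref{c1} (composition with $J$) to place $\mathcal{F}_{\sigma}(Wf)$ in $W(\mathcal{F}L^{p},L^{q})$, whereas you stay with the ordinary Fourier transform and use the evenness of $\Theta^{n}$ to get $\mathcal{F}(\mathcal{F}_{\sigma}(\Theta^{n}))=\Theta^{n}$ --- both steps are valid and interchangeable.
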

\begin{proof}
	We need to  show that  $Q^{n} f$ is in $M^{p,q}({\mathbb{R}^{2d}})$. Taking the symplectic
	Fourier transform in \eqref{bj} this is equivalent to
	\[
	\Theta^{n} \mathcal{F}_{\sigma}(Wf)=\Theta^{n} Af\in W(\mathcal{F} L^{p}%
	,L^{q})
	\]
	where $\mathcal{F}_{\sigma}(Wf)=Af$ is called  the ambiguity function of $f$, see e.g., \cite{grochenig}. The claim is attained using the product property
	\eqref{product}: by Proposition \ref{pron}, the function $\Theta^{n}\in W(\mathcal{F} L^{1},L^{\infty})$ and by assumption $Wf\in M^{p,q}%
	({\mathbb{R}^{2d}})$ so that we infer $\mathcal{F}(Wf)\in W(\mathcal{F} L^{p},L^{q})$.
Finally,  $\mathcal{F}_{\sigma}(Wf)(\zeta)=\mathcal{F}(Wf)(J\zeta) \in
	W(\mathcal{F} L^{p},L^{q})$ by Proposition \ref{c1}.
\end{proof}

The previous statement holds in greater generality and can be rephrased for members in the Cohen class as follows.
\begin{theorem}
	\label{teo2-zero-Cohen} Let $f\in\mathcal{S}^{\prime}(\mathbb{R}^{d})$ be a signal,
	with $Wf\in M^{p,q}({\mathbb{R}^{2d}})$ for some $1\leq p,q\leq\infty$ and the Cohen kernel (cf. \eqref{Cohenkernel}) $\theta\in M^{1,\infty}(\rdd)$. Then the corresponding distribution $Qf$ is in
	$M^{p,q}({\mathbb{R}^{2d}})$.
\end{theorem}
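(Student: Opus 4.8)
The plan is to mimic the proof of Theorem \ref{teo2-zero}, moving to the symplectic Fourier side where the convolution defining $Qf$ becomes a pointwise product, and then to invoke the multiplier property \eqref{product} of Wiener amalgam spaces. The only structural change is that the concrete kernel $\Theta^{n}\in W(\mathcal{F}L^{1},L^{\infty})$ is now replaced by a general $\theta$, so the first task is to recast the hypothesis $\theta\in M^{1,\infty}(\rdd)$ in the form required by \eqref{product}.

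First I would identify the relevant spaces. Since $\mathcal{F}(M^{1,\infty})=W(\mathcal{F}L^{1},L^{\infty})$ and $\mathcal{F}_{\sigma}F(\zeta)=\mathcal{F}F(J\zeta)$ with $J$ a symplectic matrix (so $|\det J|=1$ and $J^{*}J=I$), applying Proposition \ref{c1} with $A=J$ yields $\mathcal{F}_{\sigma}(M^{1,\infty})=W(\mathcal{F}L^{1},L^{\infty})$ with equivalent norms. Hence the assumption $\theta\in M^{1,\infty}(\rdd)$ is exactly the statement $\mathcal{F}_{\sigma}\theta\in W(\mathcal{F}L^{1},L^{\infty})(\rdd)$, which is precisely the role played by $\Theta^{n}$ in Theorem \ref{teo2-zero}. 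In the same way, $Wf\in M^{p,q}$ gives, via $\mathcal{F}(M^{p,q})=W(\mathcal{F}L^{p},L^{q})$ and Proposition \ref{c1}, that the ambiguity function $Af=\mathcal{F}_{\sigma}(Wf)$ lies in $W(\mathcal{F}L^{p},L^{q})$.

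Next I would apply $\mathcal{F}_{\sigma}$ to $Qf=Wf\ast\theta$ and use that $\mathcal{F}_{\sigma}$ turns convolution into multiplication, obtaining
\[
\mathcal{F}_{\sigma}(Qf)=\mathcal{F}_{\sigma}(Wf)\,\mathcal{F}_{\sigma}(\theta)=Af\cdot\mathcal{F}_{\sigma}(\theta).
\]
Because $\mathcal{F}_{\sigma}(\theta)\in W(\mathcal{F}L^{1},L^{\infty})$ and $Af\in W(\mathcal{F}L^{p},L^{q})$, the product property \eqref{product} gives $\mathcal{F}_{\sigma}(Qf)\in W(\mathcal{F}L^{p},L^{q})$. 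Reversing the identifications of the previous paragraph (once more via $\mathcal{F}_{\sigma}$ and Proposition \ref{c1}) then yields $Qf\in M^{p,q}(\rdd)$, as claimed.

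The delicate point is not any single estimate but the rigorous justification of the exchange formula $\mathcal{F}_{\sigma}(Wf\ast\theta)=\mathcal{F}_{\sigma}(Wf)\,\mathcal{F}_{\sigma}(\theta)$ together with the very meaning of $Wf\ast\theta$, since $Wf$ and $\theta$ are only tempered distributions and the product of two such objects need not exist. The amalgam structure is what saves the argument: because $W(\mathcal{F}L^{1},L^{\infty})$ multiplies $W(\mathcal{F}L^{p},L^{q})$ by \eqref{product}, the product $Af\cdot\mathcal{F}_{\sigma}(\theta)$ is unambiguously defined, and one may simply read $Qf:=\mathcal{F}_{\sigma}\bigl(Af\cdot\mathcal{F}_{\sigma}(\theta)\bigr)$, which agrees with the convolution \eqref{Cohenkernel} on the dense subspace $f\in\mathcal{S}(\rd)$ and extends it consistently. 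With this reading every step above reduces to an identification of function spaces, exactly as in the proof of Theorem \ref{teo2-zero}.
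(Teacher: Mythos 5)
Your proof is correct, but it takes a genuinely different route from the paper's. The paper disposes of this theorem in one line: it cites the convolution relation for modulation spaces from \cite{CG02}, namely $M^{p,q}({\mathbb{R}^{2d}})\ast M^{1,\infty}({\mathbb{R}^{2d}})\hookrightarrow M^{p,q}({\mathbb{R}^{2d}})$ for all $1\leq p,q\leq\infty$, which immediately yields $Qf=Wf\ast\theta\in M^{p,q}$. You instead re-run the mechanism of Theorem \ref{teo2-zero} on the symplectic Fourier side: you translate the hypothesis $\theta\in M^{1,\infty}$ into $\mathcal{F}_{\sigma}\theta\in W(\mathcal{F}L^{1},L^{\infty})$ via $\mathcal{F}(M^{1,\infty})=W(\mathcal{F}L^{1},L^{\infty})$ and Proposition \ref{c1} with $A=J$, convert the convolution into the pointwise product $Af\cdot\mathcal{F}_{\sigma}\theta$, and conclude with the product property \eqref{product}. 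All steps are sound, and your closing remark on how to interpret $Wf\ast\theta$ for $f\in\mathcal{S}'(\mathbb{R}^{d})$ (defining $Qf$ through the well-defined product and checking consistency on $\mathcal{S}(\mathbb{R}^{d})$) addresses a point the paper leaves implicit both here and in Theorem \ref{teo2-zero}; the cited embedding of \cite{CG02} resolves the same issue by giving the convolution a direct meaning. What each approach buys: the paper's proof is shorter and rests on an established external result; yours is self-contained within the paper's own toolkit, in effect re-proving the needed special case of the \cite{CG02} embedding, and it makes transparent that Theorem \ref{teo2-zero} is exactly the case $\theta=\mathcal{F}_{\sigma}\Theta^{n}$ of this statement, since $\Theta^{n}\in W(\mathcal{F}L^{1},L^{\infty})$ is equivalent to $\mathcal{F}_{\sigma}\Theta^{n}\in M^{1,\infty}$. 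One cosmetic correction: you justify $J^{*}J=I$ by saying $J$ is symplectic, but symplectic matrices need not be orthogonal; the identity holds here because the standard matrix $J$ satisfies $J^{T}=-J=J^{-1}$. This does not affect the argument.
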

\begin{proof} It is the consequence of the following  convolution relation for modulation spaces in \cite{CG02}:
	$$M^{p,q}({\mathbb{R}^{2d}})\ast
	M^{1,\infty}({\mathbb{R}^{2d}})\hookrightarrow M^{p,q}({\mathbb{R}^{2d}}),$$
	for any $1\leq p,q\leq \infty$.
\end{proof}
The chirp function $F(\zeta_{1},\zeta_{2})= e^{ 2\pi i \zeta_{1}
	\zeta_{2}}$ enjoys the following property, see \cite{ACHA2018,fei0}.

\begin{proposition}
	\label{pro1} The function $F(\zeta_{1},\zeta_{2})= e^{ 2\pi i \zeta_{1}
		\zeta_{2}}$ belongs to $W(\mathcal{F} L^{1},L^{\infty})({\mathbb{R}^{2d}})$.
\end{proposition}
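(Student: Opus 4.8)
The plan is to read off the Wiener amalgam norm from the STFT and then exploit the fact that $F$ is a pure quadratic-phase (chirp), so that its STFT against a Gaussian is again an explicit Gaussian. Recall that, for $p=1$, $q=\infty$, the Wiener amalgam norm is
\[
\|F\|_{W(\mathcal{F}L^{1},L^{\infty})}=\sup_{w\in\mathbb{R}^{2d}}\int_{\mathbb{R}^{2d}}|V_{g}F(w,\nu)|\,d\nu=\sup_{w\in\mathbb{R}^{2d}}\|V_{g}F(w,\cdot)\|_{L^{1}},
\]
and, since every non-zero Schwartz window yields an equivalent norm, I am free to pick the Gaussian $g(\zeta)=e^{-\pi|\zeta|^{2}}$ on $\mathbb{R}^{2d}$ (so $\overline g=g$). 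Thus the whole matter reduces to bounding $\|V_{g}F(w,\cdot)\|_{L^{1}}$ uniformly in $w$.

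The key step is to write $F(\zeta)=e^{\pi i\,A\zeta\cdot\zeta}$ with the real symmetric matrix $A=\left(\begin{smallmatrix}0 & I_{d}\\ I_{d} & 0\end{smallmatrix}\right)$, so that $A\zeta\cdot\zeta=2\zeta_{1}\cdot\zeta_{2}$, and to substitute $\zeta=\xi+w$ in the integral defining $V_{g}F(w,\nu)$. The symmetry of $A$ gives $A(\xi+w)\cdot(\xi+w)=A\xi\cdot\xi+2\,Aw\cdot\xi+Aw\cdot w$, hence $F(\xi+w)=F(\xi)F(w)e^{2\pi i\,Aw\cdot\xi}$, and collecting the factors independent of $\xi$ one finds
\[
V_{g}F(w,\nu)=F(w)\,e^{-2\pi i\,w\nu}\,H(\nu-Aw),\qquad H:=\mathcal{F}(Fg),
\]
where $H$ is a fixed function \emph{independent of} $w$. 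Since $|F(w)|=1$, taking moduli yields $|V_{g}F(w,\nu)|=|H(\nu-Aw)|$: the STFT modulus is a rigid translate (in $\nu$) of the single profile $|H|$. This is exactly the structural feature that gives uniformity for free, because a translation-invariant change of variables gives $\|V_{g}F(w,\cdot)\|_{L^{1}}=\|H\|_{L^{1}}$ for every $w$, so that $\|F\|_{W(\mathcal{F}L^{1},L^{\infty})}=\|H\|_{L^{1}}$.

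It then remains to check that $H\in L^{1}(\mathbb{R}^{2d})$. With the Gaussian window one has $F(\xi)g(\xi)=e^{-\pi\,B\xi\cdot\xi}$ with $B=I-iA$, whose real part $\mathrm{Re}\,B=I$ is positive definite; moreover $A$ has eigenvalues $\pm1$, so $\det B=\det(I-iA)=\big((1-i)(1+i)\big)^{d}=2^{d}\neq0$. The Fourier transform of such a non-degenerate complex Gaussian is again a Gaussian, $H(\mu)=(\det B)^{-1/2}e^{-\pi\,B^{-1}\mu\cdot\mu}$, whose modulus decays like $e^{-\pi\,\mathrm{Re}(B^{-1})\mu\cdot\mu}$ with $\mathrm{Re}(B^{-1})$ positive definite (its eigenvalues are $\tfrac12$); hence $H\in L^{1}$ and the proof is complete. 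I expect no serious obstacle here: the only technical point is justifying the complex Gaussian Fourier formula, which reduces to the non-degeneracy $\det(I-iA)\neq0$ together with the positivity of $\mathrm{Re}\,B$, and the genuinely useful observation is simply that the chirp structure forces $|V_{g}F(w,\cdot)|$ to be a translate of one fixed integrable function.
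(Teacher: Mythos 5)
Your proof is correct and complete. One thing to be aware of: the paper does not actually prove Proposition \ref{pro1} at all --- it quotes it from the literature (\cite{ACHA2018,fei0}) --- so the comparison is with those sources rather than with an in-paper argument; your computation is essentially the standard one found there. With a Gaussian window, the quadratic phase forces $V_gF(w,\nu)=F(w)\,e^{-2\pi i w\nu}H(\nu-Aw)$ with $H=\mathcal{F}(Fg)$ a fixed function, so $\|V_gF(w,\cdot)\|_{L^1}=\|H\|_{L^1}$ uniformly in $w$, and $H$ is an integrable complex Gaussian. The details all check out: the identity $F(\xi+w)=F(\xi)F(w)e^{2\pi i\,Aw\cdot\xi}$ uses only the symmetry of $A$; since $A^2=I$ one has $(I-iA)^{-1}=(I+iA)/2$, hence $\det(I-iA)=2^d$ and $\mathrm{Re}\bigl((I-iA)^{-1}\bigr)=I/2$, so $|H(\mu)|=2^{-d/2}e^{-\pi|\mu|^2/2}\in L^1$; and the complex Gaussian Fourier formula is legitimate because $\mathrm{Re}(I-iA)=I$ is positive definite. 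Two remarks. First, your argument is slightly more general than needed: it shows that \emph{every} chirp $e^{\pi i A\zeta\cdot\zeta}$ with $A$ real symmetric, invertible or not, lies in $W(\mathcal{F}L^1,L^\infty)$, because the inner $L^1$ norm of the amalgam is taken exactly in the variable $\nu$ in which $|V_gF|$ is a rigid translate of $|H|$; invertibility of $A$ would only be needed for the Fourier-side claim $F\in M^{1,\infty}$, where the integration runs over $w$ and the change of variables $w\mapsto \nu - Aw$ requires $A\in GL(2d,\mathbb{R})$. Second, you implicitly use that every nonzero Schwartz window yields an equivalent norm on $W(\mathcal{F}L^p,L^q)$; the paper states this only for $M^{p,q}$, but it transfers to the amalgam spaces since these are the Fourier images of modulation spaces --- worth a sentence in a final write-up, though entirely standard.
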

By Proposition \ref{pro1} and by the dilation properties for Wiener amalgam
spaces in \eqref{dilAW0} we obtain
\begin{corollary}
	\label{cor1} For $\zeta=(\zeta_{1},\zeta_{2})$, consider the function
	$F_{J}(\zeta)=F( J \zeta)= e^{- 2\pi i \zeta_{1} \zeta_{2}}$.  Then $F_{J}\in
	W(\mathcal{F} L^{1},L^{\infty})({\mathbb{R}^{2d}})$.
\end{corollary}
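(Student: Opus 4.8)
The plan is to observe that $F_J$ is nothing but a linear substitution applied to the chirp $F$, namely $F_J(\zeta)=F(J\zeta)$, and then to invoke the dilation estimate \eqref{dilAW0} together with Proposition \ref{pro1}. Since $J\in GL(2d,\mathbb{R})$, the function $F_J$ is precisely of the form $F(A\,\cdot)$ with $A=J$, so membership in $W(\mathcal{F}L^{1},L^{\infty})$ should be inherited from that of $F$ up to an explicit dimensional constant; no genuinely new analytic input is required.

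Concretely, I would first record from Proposition \ref{pro1} that $F\in W(\mathcal{F}L^{1},L^{\infty})(\mathbb{R}^{2d})$. Next I would apply Proposition \ref{c1}, read in the ambient dimension $2d$ in place of $d$, with the matrix $A=J$ and the exponents $p=1$, $q=\infty$. The factor $|\det A|^{1/p-1/q-1}$ is harmless here: with $p=1$, $q=\infty$ the exponent $1/p-1/q-1$ vanishes, so this factor equals $1$ (and in any case $\det J=1$). It then remains to control $(\det(I+A^{*}A))^{1/2}$. Using that $J$ is orthogonal, i.e.\ $J^{*}J=I_{2d}$ (equivalently $J^{*}=-J$ and $J^{2}=-I_{2d}$, whence $J^{*}J=-J^{2}=I_{2d}$), one gets $I+J^{*}J=2I_{2d}$, so that $(\det(I+J^{*}J))^{1/2}=2^{d}$. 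Plugging these into \eqref{dilAW0} yields
\[
\|F_J\|_{W(\mathcal{F}L^{1},L^{\infty})}\leq C\,2^{d}\,\|F\|_{W(\mathcal{F}L^{1},L^{\infty})}<\infty,
\]
which is exactly the assertion of the corollary.

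The only point requiring a word of care — and the main, rather minor, obstacle — is the dimensional mismatch: Proposition \ref{c1} is literally stated for functions on $\mathbb{R}^{d}$ and matrices $A\in GL(d,\mathbb{R})$, whereas here we work on the phase space $\mathbb{R}^{2d}$ and the relevant matrix is the symplectic $J\in GL(2d,\mathbb{R})$. I would therefore emphasise that the dilation estimate \eqref{dilAW0} holds verbatim in every dimension, so that it applies with $2d$ playing the role of $d$; everything else reduces to the elementary linear-algebra computation of $\det J$ and of $J^{*}J$ recorded above.
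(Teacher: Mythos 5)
Your proof is correct and follows exactly the paper's route: the authors also derive Corollary \ref{cor1} by combining Proposition \ref{pro1} with the dilation estimate \eqref{dilAW0} applied to $A=J$ on $\mathbb{R}^{2d}$. You have merely made explicit the constants ($\det J=1$, $I+J^{*}J=2I_{2d}$) and the harmless dimensional substitution $d\mapsto 2d$, which the paper leaves implicit.
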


\section{Smoothness of the Born-Jordan distribution of order $n$}
In this section we survey the results in \cite[Sec. 5]{BJsplines2019}, comparing the smoothness of the Born-Jordan
distribution of order $n$ with the Wigner distribution.

For  the following global result  we use the notation
\begin{equation}\label{defop}
\nabla_{x}\cdot\nabla_{\omega}:=\sum_{j=1}^{d} \frac{\partial^{2}}%
{\partial{x_{j}}\partial{\omega_{j}}}.
\end{equation}
\begin{theorem}
	\label{teo2} Let $f\in\mathcal{S}^{\prime}(\mathbb{R}^{d})$ be a signal, with
	$Wf\in M^{p,q}({\mathbb{R}^{2d}})$ for some $1\leq p,q\leq\infty$. Then, for any $n\in \bN_+$,
	\[
	Q^{n}f\in M^{p,q}({\mathbb{R}^{2d}})
	\]
	and 
	\begin{equation}
	\label{eq1}(\nabla_{x}\cdot\nabla_{\omega})^{n} Q^{n}f\in M^{p,q}%
	({\mathbb{R}^{2d}}).
	\end{equation}
	
\end{theorem}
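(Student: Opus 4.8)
The first assertion, $Q^{n}f\in M^{p,q}(\mathbb{R}^{2d})$, is exactly the content of Theorem \ref{teo2-zero}, so the real work is the smoothness estimate \eqref{eq1}. The plan is to pass to the symplectic-Fourier side, where the constant-coefficient operator $(\nabla_{x}\cdot\nabla_{\omega})^{n}$ becomes multiplication by a polynomial and the convolution defining $Q^{n}f$ becomes a product. Recall from the notation section that $g\in M^{p,q}(\mathbb{R}^{2d})$ if and only if $\mathcal{F}_{\sigma}g\in W(\mathcal{F}L^{p},L^{q})(\mathbb{R}^{2d})$: indeed $\mathcal{F}(M^{p,q})=W(\mathcal{F}L^{p},L^{q})$ and $\mathcal{F}_{\sigma}g=(\mathcal{F}g)\circ J$ with $J\in GL(2d,\mathbb{R})$, so the equivalence follows from the dilation bound \eqref{dilAW0} of Proposition \ref{c1}. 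Thus it suffices to prove that $\mathcal{F}_{\sigma}\big((\nabla_{x}\cdot\nabla_{\omega})^{n}Q^{n}f\big)\in W(\mathcal{F}L^{p},L^{q})$.

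First I would compute the Fourier multiplier of $\nabla_{x}\cdot\nabla_{\omega}$. Integrating by parts in the definition of $\mathcal{F}_{\sigma}$ and using $\sigma(\zeta,z)=\zeta_{2}\cdot x-\zeta_{1}\cdot\omega$, one finds $\mathcal{F}_{\sigma}(\partial_{x_{j}}g)=2\pi i\,\zeta_{2,j}\,\mathcal{F}_{\sigma}g$ and $\mathcal{F}_{\sigma}(\partial_{\omega_{j}}g)=-2\pi i\,\zeta_{1,j}\,\mathcal{F}_{\sigma}g$, whence
\[
\mathcal{F}_{\sigma}\big((\nabla_{x}\cdot\nabla_{\omega})^{n}g\big)(\zeta)=(4\pi^{2})^{n}(\zeta_{1}\cdot\zeta_{2})^{n}\,\mathcal{F}_{\sigma}g(\zeta).
\]
Applying this with $g=Q^{n}f$ and recalling from the proof of Theorem \ref{teo2-zero} that $\mathcal{F}_{\sigma}(Q^{n}f)=\Theta^{n}\cdot Af$ with $Af=\mathcal{F}_{\sigma}(Wf)$, I obtain
\[
\mathcal{F}_{\sigma}\big((\nabla_{x}\cdot\nabla_{\omega})^{n}Q^{n}f\big)(\zeta)=(4\pi^{2})^{n}(\zeta_{1}\cdot\zeta_{2})^{n}\,\mathrm{sinc}^{n}(\zeta_{1}\cdot\zeta_{2})\,Af(\zeta).
\]

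The crucial algebraic simplification is then
\[
(\zeta_{1}\cdot\zeta_{2})^{n}\,\mathrm{sinc}^{n}(\zeta_{1}\cdot\zeta_{2})=\pi^{-n}\sin^{n}(\pi\,\zeta_{1}\cdot\zeta_{2}),
\]
which follows at once from $u\,\mathrm{sinc}(u)=\pi^{-1}\sin(\pi u)$. This is the heart of the argument: the polynomial growth produced by the differential operator is exactly absorbed by the decaying $\mathrm{sinc}$ factors of the kernel, leaving a bounded, smooth, oscillatory multiplier. To place this multiplier in the Wiener algebra I would expand, via the binomial formula applied to $\sin(\pi u)=(e^{i\pi u}-e^{-i\pi u})/(2i)$,
\[
\sin^{n}(\pi\,\zeta_{1}\cdot\zeta_{2})=\frac{1}{(2i)^{n}}\sum_{k=0}^{n}\binom{n}{k}(-1)^{k}e^{\,i\pi(n-2k)\,\zeta_{1}\cdot\zeta_{2}},
\]
a finite linear combination of chirps $e^{2\pi i\lambda\,\zeta_{1}\cdot\zeta_{2}}$ with $\lambda=(n-2k)/2$. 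Each such chirp lies in $W(\mathcal{F}L^{1},L^{\infty})(\mathbb{R}^{2d})$: for $\lambda\neq0$ it is a linear dilation of the function $F$ of Proposition \ref{pro1} (apply $A=\mathrm{diag}(\lambda I_{d},I_{d})\in GL(2d,\mathbb{R})$ and invoke \eqref{dilAW0}), while for $\lambda=0$ the term is the constant $1$, which is easily seen to belong to $W(\mathcal{F}L^{1},L^{\infty})$ as well. Since $W(\mathcal{F}L^{1},L^{\infty})$ is a vector space, $\sin^{n}(\pi\,\zeta_{1}\cdot\zeta_{2})\in W(\mathcal{F}L^{1},L^{\infty})$.

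To finish, since $Wf\in M^{p,q}$ gives $Af=\mathcal{F}_{\sigma}(Wf)\in W(\mathcal{F}L^{p},L^{q})$ (as in the proof of Theorem \ref{teo2-zero}), the product property \eqref{product} applied to the factor $\sin^{n}(\pi\,\zeta_{1}\cdot\zeta_{2})\in W(\mathcal{F}L^{1},L^{\infty})$ and the factor $Af\in W(\mathcal{F}L^{p},L^{q})$ yields $\mathcal{F}_{\sigma}\big((\nabla_{x}\cdot\nabla_{\omega})^{n}Q^{n}f\big)\in W(\mathcal{F}L^{p},L^{q})$, which is \eqref{eq1}. The only genuinely delicate point is spotting the absorption identity for the multiplier; once that is recognized, the membership of the chirps in the Wiener algebra and the concluding product estimate are routine. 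A minor technical matter I would check separately is that the multiplier identities, derived formally above, are valid for the tempered distribution $Q^{n}f$ rather than merely for Schwartz functions; this is justified by the density of $\mathcal{S}$ and the continuity on $\mathcal{S}'$ of all the operations involved.
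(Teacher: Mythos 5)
Your proof is correct and follows essentially the same route as the paper: pass to the symplectic Fourier side, absorb the polynomial $(\zeta_{1}\cdot\zeta_{2})^{n}$ into the $\mathrm{sinc}$ factors to obtain $\pi^{-n}\sin^{n}(\pi\zeta_{1}\zeta_{2})$, place this multiplier in $W(\mathcal{F}L^{1},L^{\infty})$ via chirps and the dilation estimate of Proposition \ref{c1}, and conclude with the product property \eqref{product} applied to $\mathcal{F}_{\sigma}(Wf)\in W(\mathcal{F}L^{p},L^{q})$. The only immaterial difference is organizational: you expand $\sin^{n}$ by the binomial theorem into a finite sum of chirps and apply the product property once, whereas the paper proves $\sin(\pi\zeta_{1}\zeta_{2})\in W(\mathcal{F}L^{1},L^{\infty})$ (via Proposition \ref{pro1}, Corollary \ref{cor1} and the scaling $\lambda=1/\sqrt{2}$) and then runs an induction on $n$.
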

\begin{proof}
	The claim   $Q^{n} f\in M^{p,q}({\mathbb{R}^{2d}})$ is proven in Theorem \ref{teo2-zero}.
	
	We now prove \eqref{eq1}. Taking the symplectic Fourier transform we see that
	it is sufficient to prove that
	\[
	(\zeta_{1}\zeta_{2})^{n}\, \mathrm{sinc}^{n}(\zeta_{1}\zeta_{2})
	\mathcal{F}_{\sigma}Wf=\frac{1}{\pi^{n}}\sin^{n}(\pi\zeta_{1}\zeta_{2})
	\mathcal{F}_{\sigma}Wf\in W(\mathcal{F} L^{p},L^{q}).
	\]
Using
	\begin{equation}
	\label{sin}\sin(\pi\zeta_{1}\zeta_{2})=\frac{e^{\pi i \zeta_{1}\zeta_{2}%
		}-e^{-\pi i \zeta_{1}\zeta_{2}}}{2i}
	\end{equation}
	and applying  Proposition \ref{pro1}, Corollary \ref{cor1} and Proposition \ref{c1}, with
	the scaling $\lambda=1/\sqrt{2}$, we get 
	$ \sin(\pi\zeta_{1}\zeta_{2})\in W(\mathcal{F} L^{1},L^{\infty})$.
	
	Hence, for $n=1$,
	\[
	\frac{1}{\pi}\sin(\pi\zeta_{1}\zeta_{2}) \mathcal{F}_{\sigma}Wf\in
	W(\mathcal{F} L^{p},L^{q})
	\]
	by the product property \eqref{product}. Assume now that, for a certain
	$n\in\mathbb{N}_{+}$,
	\[
	\frac{1}{\pi^{n}}\sin^{n}(\pi\zeta_{1}\zeta_{2}) \mathcal{F}_{\sigma}Wf\in
	W(\mathcal{F} L^{p},L^{q}).
	\]
	Then
	\[
	\frac{1}{\pi^{n+1}}\sin^{n+1}(\pi\zeta_{1}\zeta_{2}) \mathcal{F}_{\sigma
	}Wf=\underset{\in W(\mathcal{F} L^{1},L^{\infty})}{\underbrace{\frac1\pi
			\sin(\pi\zeta_{1}\zeta_{2})}}\cdot\underset{\in W(\mathcal{F} L^{p}%
		,L^{q})}{\underbrace{\frac{1}{\pi^{n}}\sin^{n}(\pi\zeta_{1}\zeta_{2})
			\mathcal{F}_{\sigma}Wf}} \in W(\mathcal{F} L^{p},L^{q}),
	\]
	by \eqref{sin} and the product property \eqref{product} again. By induction we
	attain the result.\par
\end{proof}

We can now provide the mathematical explanation of the $Q^{n}$'s smoothing effects: 
\begin{theorem}
	\label{mainteo}  Let $f\in\mathcal{S}^{\prime}(\mathbb{R}^{d})$ be a signal,
	with $Wf\in M^{\infty,q}({\mathbb{R}^{2d}})$ for some $1\leq q\leq\infty$. Let
	$({z},{\zeta})\in{\mathbb{R}^{2d}}\times{\mathbb{R}^{2d}}$, with ${\zeta
	}=({\zeta}_{1},{\zeta}_{2})$ satisfying ${\zeta}_{1}\cdot{\zeta}_{2}\not =0$.
	Then
	\[
	({z},{\zeta})\not \in WF_{\mathcal{F} L^{q}_{2n}}(Q^{n}f).
	\]	
\end{theorem}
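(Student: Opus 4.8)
The plan is to derive the microlocal smoothing from the \emph{global} gain of regularity already contained in Theorem~\ref{teo2}, combined with the microlocal elliptic regularity of Proposition~\ref{pro3}. The operator to exploit is $P=(\nabla_{x}\cdot\nabla_{\omega})^{n}$, a constant-coefficient differential operator of order $2n$ on the phase space $\mathbb{R}^{2d}$. Viewing $z=(x,\omega)$ as base variable and $\zeta=(\zeta_{1},\zeta_{2})$ as the dual covector, the top-order symbol of $P$ is (up to a nonzero constant) $(\zeta_{1}\cdot\zeta_{2})^{n}$. Hence the non-characteristic condition for $P$ at $\zeta$, namely $\sum_{|\alpha|=2n}c_{\alpha}\zeta^{\alpha}\neq0$, reduces exactly to $\zeta_{1}\cdot\zeta_{2}\neq0$, which is precisely the hypothesis placed on $(z,\zeta)$.

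First I would record that, since $Wf\in M^{\infty,q}$, Theorem~\ref{teo2} applied with $p=\infty$ yields both $Q^{n}f\in M^{\infty,q}({\mathbb{R}^{2d}})$ and $PQ^{n}f=(\nabla_{x}\cdot\nabla_{\omega})^{n}Q^{n}f\in M^{\infty,q}({\mathbb{R}^{2d}})$. The goal is then to feed $PQ^{n}f$ into Proposition~\ref{pro3} (on $\mathbb{R}^{2d}$) with $s=0$ and $m=2n$: once we know $(z,\zeta)\notin WF_{\mathcal{F}L^{q}_{0}}(PQ^{n}f)$, the ellipticity of $P$ at $\zeta$ upgrades this to $(z,\zeta)\notin WF_{\mathcal{F}L^{q}_{2n}}(Q^{n}f)$, which is the assertion.

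The crux, and the step I expect to be the real obstacle, is to convert the global membership $PQ^{n}f\in M^{\infty,q}$ into the statement $WF_{\mathcal{F}L^{q}_{0}}(PQ^{n}f)=\emptyset$, since the wave-front set is phrased through the Fourier--Lebesgue/Wiener amalgam scale rather than the modulation scale. I would bridge the two as follows. From the pointwise bound $|V_{g}u(x,\omega)|\leq\sup_{x'}|V_{g}u(x',\omega)|$ one gets the elementary inequality $\|V_{g}u\|_{L^{\infty}_{x}L^{q}_{\omega}}\leq\|V_{g}u\|_{L^{q}_{\omega}L^{\infty}_{x}}$, i.e.\ the continuous embedding $M^{\infty,q}({\mathbb{R}^{2d}})\hookrightarrow W(\mathcal{F}L^{q},L^{\infty})({\mathbb{R}^{2d}})$. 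For any $u\in W(\mathcal{F}L^{q},L^{\infty})$ and any cut-off $\varphi\in C^{\infty}_{c}(\mathbb{R}^{2d})$, noting that $\varphi\in\mathcal{S}\subset W(\mathcal{F}L^{1},L^{\infty})$ (since $\widehat{\varphi}\in\mathcal{S}\subset L^{1}$), the product property \eqref{product} gives $\varphi u\in W(\mathcal{F}L^{q},L^{\infty})$; being compactly supported, $\varphi u$ then lies in $\mathcal{F}L^{q}$ by the standard local equivalence of the modulation/amalgam scale with $\mathcal{F}L^{q}$ on compactly supported distributions. Consequently $\mathcal{F}[\varphi u]\in L^{q}(\mathbb{R}^{2d})\subset L^{q}(\Gamma)$ for every cone $\Gamma$, so the defining condition \eqref{eq4} with $s=0$ holds at every point and direction, which means $WF_{\mathcal{F}L^{q}_{0}}(u)=\emptyset$.

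Applying this with $u=PQ^{n}f$ shows $(z,\zeta)\notin WF_{\mathcal{F}L^{q}_{0}}(PQ^{n}f)$; since $\zeta$ is non-characteristic for $P$, Proposition~\ref{pro3} delivers $(z,\zeta)\notin WF_{\mathcal{F}L^{q}_{2n}}(Q^{n}f)$, completing the argument. The two points deserving care are the symbol computation identifying non-characteristicity with $\zeta_{1}\cdot\zeta_{2}\neq0$, and the compact-support reduction giving $\varphi u\in\mathcal{F}L^{q}$; the remainder is a bookkeeping assembly of Theorem~\ref{teo2} and Proposition~\ref{pro3}.
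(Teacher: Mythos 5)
Your proposal is correct and takes essentially the same route as the paper's proof: apply Theorem~\ref{teo2} with $p=\infty$, observe that the non-characteristic directions of $P^{n}=(\nabla_{x}\cdot\nabla_{\omega})^{n}$ are exactly those with $\zeta_{1}\cdot\zeta_{2}\neq 0$, and conclude by the microlocal elliptic regularity of Proposition~\ref{pro3}. The only difference is that you spell out in detail the bridging step (that $\varphi F\in\mathcal{F}L^{q}$ for $\varphi\in C^{\infty}_{c}({\mathbb{R}^{2d}})$ and $F\in M^{\infty,q}({\mathbb{R}^{2d}})$, hence $WF_{\mathcal{F}L^{q}}(P^{n}Q^{n}f)=\emptyset$), which the paper asserts in a single line, and your argument for it, via the embedding $M^{\infty,q}\hookrightarrow W(\mathcal{F}L^{q},L^{\infty})$, the product property and the compact-support reduction, is sound.
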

\begin{proof}
Consider $n\in\mathbb{N}_{+}$. We will apply
	Proposition \ref{pro3} to the $2n$-th order operator $P^{n}$, where
	$P$ is  defined in \eqref{defop}. The non
	characteristic directions for $P^{n}$ are given by the vectors $\zeta
	=(\zeta_{1},\zeta_{2})\in\mathbb{R}^{d}\times\mathbb{R}^{d}$, satisfying
	$\zeta_{1}\cdot\zeta_{2}\not =0$. By \eqref{eq1} (with $p=\infty$) we have
	\[
	WF_{\mathcal{F} L^{q}}(P^{n} Q^{n} f)=\emptyset,
	\]
	because $\varphi F\in\mathcal{F} L^{q}$ if $\varphi\in C^{\infty}%
	_{c}({\mathbb{R}^{2d}})$ and $F\in M^{\infty,q}({\mathbb{R}^{2d}})$ (with
	$F=P^{n} Q^{n}f$). This implies
	\[
	(z,\zeta)\not \in WF_{\mathcal{F} L^{q}}(P^{n} Q^{n}f),\quad\forall
	(z,\zeta)\,\,\mbox{such \,that}\,\, \,\zeta=(\zeta_{1},\zeta_{2}%
	),\,\,\zeta_{1}\cdot\zeta_{2}\not =0.
	\]
	Since $\zeta$ is non characteristic for the operator $P^{n}$, by Proposition
	\ref{pro3} we infer
	\[
	(z,\zeta)\not \in WF_{\mathcal{F} L^{q}_{2n}}(Q^{n}f)
	\]
	for every $z\in{\mathbb{R}^{2d}}$.
\end{proof}

Roughly speaking, if the Wigner distribution $Wf$ has local regularity
$\mathcal{F} L^{q}$ and some control at infinity, then $Q^{n}f$ is smoother,
possessing \textbf{$s=2n$ additional derivatives}, at least in the directions
${\zeta}=({\zeta}_{1},{\zeta}_{2})$ satisfying ${\zeta}_{1}\cdot{\zeta}%
_{2}\not =0$. In dimension $d=1$ this condition reduces to ${\zeta}_{1}%
\not =0$ and ${\zeta}_{2}\not =0$. Hence this result explains the smoothing
phenomenon of such distributions, which involves all the directions except
those of the coordinates axes. 

\emph{This is the reason why the interferences of two components
which do not share the same time or frequency localization come out
substantially reduced}.

 Observe that for $n=1$ we recapture the damping
phenomenon of the classical Born-Jordan distribution (cf. \cite[Theorem 1.2]{ACHA2018}).

For signals in $L^{2}(\mathbb{R}^{d})$, the previous result can be rephrased in terms of the classical H\"ormander's wave-front set.

\begin{corollary}
	\label{cor} Let $f\in L^{2}(\mathbb{R}^{d})$, so that $Wf\in L^{2}%
	({\mathbb{R}^{2d}})$. Let $(z,\zeta)$ be as in the statement of Theorem
	\ref{mainteo}. Then $({z},{\zeta})\not \in WF_{H^{2n}}(Q^{n}f)$, that is the distribution $Q^{n}f$
	has regularity $H^{2n}$ at $z$ and in the direction $\zeta$.
\end{corollary}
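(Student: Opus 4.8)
The plan is to deduce the corollary directly from Theorem \ref{mainteo} by specializing the exponent $q$ to the value $q=2$ and then invoking the identification between the Fourier--Lebesgue wave-front set $WF_{\mathcal{F}L^2_s}$ and the classical Sobolev wave-front set $WF_{H^s}$ recorded right after the definition of $WF_{\mathcal{F}L^q_s}$, namely $WF_{\mathcal{F}L^2_s}(g)=WF_{H^s}(g)$. Thus the whole content of the corollary is a matter of checking that the hypotheses of Theorem \ref{mainteo} are satisfied in the $L^2$ setting with this particular choice of $q$.

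First I would verify that $Wf$ lies in the space $M^{\infty,2}(\mathbb{R}^{2d})$ required by Theorem \ref{mainteo} for $q=2$. Since $f\in L^2(\mathbb{R}^d)$, Moyal's identity gives $Wf\in L^2(\mathbb{R}^{2d})$, and recalling the identification $L^2(\mathbb{R}^{2d})=M^{2,2}(\mathbb{R}^{2d})$ noted in the discussion of modulation spaces, it remains only to pass from $M^{2,2}$ to $M^{\infty,2}$. This is furnished by the standard monotonicity inclusion $M^{p_1,q_1}\hookrightarrow M^{p_2,q_2}$, valid whenever $p_1\le p_2$ and $q_1\le q_2$: taking $(p_1,q_1)=(2,2)$ and $(p_2,q_2)=(\infty,2)$ yields $M^{2,2}\hookrightarrow M^{\infty,2}$, hence $Wf\in M^{\infty,2}(\mathbb{R}^{2d})$.

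With the hypotheses of Theorem \ref{mainteo} now in force for $q=2$, and since $(z,\zeta)$ is assumed to satisfy $\zeta_1\cdot\zeta_2\neq0$ as in the statement of that theorem, I would apply it to obtain $(z,\zeta)\notin WF_{\mathcal{F}L^2_{2n}}(Q^nf)$. Finally, substituting $s=2n$ and $g=Q^nf$ into the identity $WF_{\mathcal{F}L^2_s}=WF_{H^s}$ converts this into $(z,\zeta)\notin WF_{H^{2n}}(Q^nf)$, which is precisely the asserted $H^{2n}$ microlocal regularity of $Q^nf$ at $z$ in the direction $\zeta$.

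I do not anticipate any genuine obstacle here, since the statement is a plain specialization of Theorem \ref{mainteo}. The single point that deserves an explicit word of justification is the embedding $M^{2,2}\hookrightarrow M^{\infty,2}$ needed to match the $M^{\infty,q}$ hypothesis of the theorem; this is a routine exponent-monotonicity property of modulation spaces and can be cited from the standard references on modulation spaces already quoted in the paper.
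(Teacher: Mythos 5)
Your proposal is correct and follows exactly the paper's own argument: Moyal's formula gives $Wf\in L^{2}(\mathbb{R}^{2d})=M^{2,2}(\mathbb{R}^{2d})$, the inclusion $M^{2,2}\hookrightarrow M^{\infty,2}$ puts you in the hypotheses of Theorem \ref{mainteo} with $q=2$, and the identification $WF_{\mathcal{F}L^{2}_{2n}}=WF_{H^{2n}}$ finishes the proof. No gaps; the only difference is that you spell out the exponent-monotonicity inclusion that the paper cites implicitly as ``inclusion relations for modulation spaces.''
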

\begin{proof}
We apply Theorem \ref{mainteo} with $q=2$. In fact,
	for $f\in L^{2}(\mathbb{R}^{d})$, Moyal's formula gives $Wf\in L^{2}%
	({\mathbb{R}^{2d}})=M^{2,2}(\mathbb{R}^{d})\subset M^{\infty,2}({\mathbb{R}%
		^{2d}})$, by inclusion relations for modulation spaces. Observe that the $\mathcal{F} L^{2}_{2n}$
	wave-front set coincides with the $H^{2n}$ wave-front set.
\end{proof}

What about smoothing effects  in the directions  ${\zeta}=({\zeta}_1,{\zeta}_2)$:  ${\zeta}_1\cdot{\zeta}_2=0$?

It seems that the smoothing effects could not occur in these directions, as  Fig. $1$  shows. The mathematical  explanation is below.

\begin{theorem}
	\label{teo3} Suppose that for some $1\leq p,q_{1},q_{2}\leq\infty$,
	$n\in\mathbb{N}_{+}$ and $C>0$, it occurs
	\begin{equation}
	\label{test}\|Q^{n}f\|_{M^{p,q_{1}}}\leq C\|W f\|_{M^{p,q_{2}}},
	\end{equation}
	for every $f\in\mathcal{S}(\mathbb{R}^{d})$. Then $q_{1}\geq q_{2}$.
\end{theorem}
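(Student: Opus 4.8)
The plan is to argue by contradiction: assuming the estimate \eqref{test}, I suppose $q_{1}<q_{2}$ and construct a family of Schwartz signals for which \eqref{test} cannot hold uniformly. The first step is to recast \eqref{test} as a Fourier-multiplier estimate on the ambiguity side. Taking the symplectic Fourier transform and recalling, as in the proof of Theorem \ref{teo2-zero}, that $\mathcal{F}_{\sigma}(Wf)=Af$ is the ambiguity function of $f$, while $\mathcal{F}_{\sigma}(Q^{n}f)=\Theta^{n}\,Af$ by \eqref{e17} and the convolution theorem for $\mathcal{F}_{\sigma}$, I would use the norm equivalence $\|F\|_{M^{p,q}}\asymp\|\mathcal{F}_{\sigma}F\|_{W(\mathcal{F}L^{p},L^{q})}$. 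This follows from $\mathcal{F}(M^{p,q})=W(\mathcal{F}L^{p},L^{q})$ together with Proposition \ref{c1} applied to the matrix $J$ (whose inverse is $-J$), and it turns \eqref{test} into
\[
\|\Theta^{n}\,Af\|_{W(\mathcal{F}L^{p},L^{q_{1}})}\leq C'\,\|Af\|_{W(\mathcal{F}L^{p},L^{q_{2}})},\qquad f\in\cS(\rd).
\]

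Next I would feed in coherent trains along a single coordinate axis. Fixing a Schwartz window $g$ and a spacing $L$ larger than the support of $g$, set $f_{N}=\sum_{k=0}^{N-1}T_{kLe_{1}}g$, with $e_{1}$ the first coordinate direction, chosen so that all the phase-space differences generated by the train lie on the axis $\{\zeta_{1}\cdot\zeta_{2}=0\}$ where $\Theta^{n}\equiv1$. Expanding $Af_{N}=\sum_{j,k}A(T_{jLe_{1}}g,T_{kLe_{1}}g)$ and collecting the terms with fixed $m=j-k$, one sees that, up to rapidly decaying Schwartz tails, $Af_{N}$ splits into lumps strung along the $\zeta_{1}$-axis at $\zeta_{1}=mLe_{1}$, $|m|\leq N-1$: the $m$-th lump is the fixed profile $A(g,g)$, translated in $\zeta_{1}$ and multiplied in the transverse variable $\zeta_{2}$ by a Dirichlet-type kernel of amplitude $\sim N-|m|$ and transverse width $\sim\bigl((N-|m|)L\bigr)^{-1}$. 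The decisive feature is that these lumps lie on the axis and are thin transversally, so that on the lumps carrying the bulk of the norm (those with $|m|$ a small fraction of $N$, hence $|\zeta_{1}\cdot\zeta_{2}|\lesssim|m|/N\lesssim1$) one has $\Theta^{n}\approx1$, whereas the transverse Dirichlet replicas, sitting where $\zeta_{1}\cdot\zeta_{2}\in\bZ\setminus\{0\}$, are killed by $\mathrm{sinc}^{n}$.

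Then I would evaluate both norms. Since the lumps are separated in position they combine in the outer $L^{q}$ norm as an $\ell^{q}$-sum, while each lump has local $\mathcal{F}L^{p}$ norm $\sim(N-|m|)^{1/p}$; summing $(N-|m|)^{q/p}$ over $|m|<N$ yields
\[
\|Af_{N}\|_{W(\mathcal{F}L^{p},L^{q})}\asymp c_{g,L}\,N^{1/p+1/q}.
\]
Because $\Theta^{n}\approx1$ on the dominant small-$|m|$ lumps, the left-hand side obeys the matching lower bound $\|\Theta^{n}Af_{N}\|_{W(\mathcal{F}L^{p},L^{q_{1}})}\gtrsim N^{1/p+1/q_{1}}$; the multiplier estimate then forces $N^{1/p+1/q_{1}}\lesssim N^{1/p+1/q_{2}}$ for every $N$, whence $1/q_{1}\leq1/q_{2}$, that is $q_{1}\geq q_{2}$.

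The hard part will be the uncertainty-dictated tension between the two properties the lumps must enjoy simultaneously: well-separatedness along the axis, needed so that the $\ell^{q}$-counting distinguishes $q_{1}$ from $q_{2}$, and transverse thinness, needed so that $\Theta^{n}\approx1$ does not damp precisely the portion of the norm being measured. The coherent train of length $N$ with fixed spacing is exactly what reconciles them, producing transverse widths $\sim1/N$ on the dominant lumps; the delicate quantitative step is to show that the off-axis pieces — the oscillatory $\mathrm{sinc}^{n}$ tails and the Dirichlet replicas — contribute negligibly, so that the clean power $N^{1/p+1/q_{i}}$ survives on both sides.
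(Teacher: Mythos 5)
Your overall strategy coincides with the paper's skeleton: rewrite \eqref{test} as a multiplier estimate $\|\Theta^{n}Af\|_{W(\mathcal{F}L^{p},L^{q_{1}})}\lesssim\|Af\|_{W(\mathcal{F}L^{p},L^{q_{2}})}$ on the ambiguity side, then defeat it with a family of signals whose ambiguity function concentrates on the degenerate set $\{\zeta_{1}\cdot\zeta_{2}=0\}$, where $\Theta^{n}$ cannot damp, and compare matching power asymptotics. But your test family is genuinely different. The paper uses dilated Gaussians $f_{\lambda}=\varphi(\lambda\,\cdot)$, whose ambiguity function is a single anisotropic Gaussian squeezed onto the axis $\zeta_{1}=0$; both norms then scale like $\lambda^{-2d+d/p+d/q}$, via the explicit formula \eqref{wignerdil}, the Gaussian dilation asymptotics \eqref{eqa3}, and the key lower bound \eqref{e27}, which is not reproved but cited from \cite[Theorem 1.4]{ACHA2018}. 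Your pulse trains $f_{N}=\sum_{k}T_{kLe_{1}}g$ put the mass on the axis $\zeta_{2}=0$ instead, through $\sim N$ spatially separated Dirichlet-modulated lumps, with $N^{1/p+1/q}$ playing the role of $\lambda^{-2d+d/p+d/q}$. The structural picture you describe (lumps at $\zeta_{1}=mLe_{1}$ of transverse height $N-|m|$, replicas sitting where $\zeta_{1}\cdot\zeta_{2}\in\mathbb{Z}\setminus\{0\}$ and killed by $\mathrm{sinc}^{n}$) is correct, and the final dichotomy $N^{1/p+1/q_{1}}\lesssim N^{1/p+1/q_{2}}$ for all $N$ does force $q_{1}\geq q_{2}$. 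What your route buys is self-containedness and a more signal-theoretic example; what it costs is that none of the needed asymptotics can be cited---they all have to be proved.

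The genuine gap is the lower bound $\|\Theta^{n}Af_{N}\|_{W(\mathcal{F}L^{p},L^{q_{1}})}\gtrsim N^{1/p+1/q_{1}}$, which you justify by ``$\Theta^{n}\approx1$ on the dominant lumps''. Pointwise proximity to $1$ proves nothing here: norms of multiplication operators on $W(\mathcal{F}L^{p},L^{q})$ are not controlled by the sup norm of the multiplier, so you cannot pass from $|\Theta^{n}-1|$ small on a region to a lower bound for $\|\Theta^{n}F\|$ coming from that region. The correct mechanism---the one the paper uses at the exactly analogous step---is localization plus inversion: multiply by a cutoff $\chi(\zeta_{1}\zeta_{2})$, which is a bounded operation by Lemma \ref{lemma5.1} and the product property \eqref{product}, and then write $\chi F=\bigl(\tilde{\chi}\,\Theta^{-n}\bigr)\cdot\bigl(\chi\,\Theta^{n}F\bigr)$ with $\tilde{\chi}\equiv1$ on $\operatorname{supp}\chi$, using that $\tilde{\chi}(\zeta_{1}\zeta_{2})\Theta^{-n}(\zeta_{1},\zeta_{2})\in W(\mathcal{F}L^{1},L^{\infty})$ (Lemma \ref{lemma5.1} again, since $\mathrm{sinc}$ has no zeros on $\operatorname{supp}\tilde{\chi}$) to conclude $\|\chi(\zeta_{1}\zeta_{2})Af_{N}\|\lesssim\|\Theta^{n}Af_{N}\|$. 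Even after this repair, your task reduces to the still-unproved estimate $\|\chi(\zeta_{1}\zeta_{2})Af_{N}\|_{W(\mathcal{F}L^{p},L^{q_{1}})}\gtrsim N^{1/p+1/q_{1}}$, i.e.\ that the central Dirichlet peaks alone carry local $\mathcal{F}L^{p}$ mass $\gtrsim(N-|m|)^{1/p}$ uniformly over $|m|\leq\epsilon N$, with all tails and replica contributions controlled, together with the upper bound $\|Af_{N}\|_{W(\mathcal{F}L^{p},L^{q_{2}})}\lesssim N^{1/p+1/q_{2}}$. These are precisely the counterparts of \eqref{eqa3} and \eqref{e27}; in your write-up they are asserted rather than proved, and they are where essentially all of the work lies. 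The plan is viable, but as it stands it is a program, not a proof.
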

\begin{proof} The main steps are as follows. 
	We will test the estimate \eqref{test} using rescaled Gaussian functions
	$f(x)=\varphi(\lambda x)$, with $\lambda>0$ large parameter. Restricting to a neighbourhood of $\zeta_{1}\cdot\zeta_{2}=0$, the
	constrain $q_{1}\geq q_{2}$ must be satisfied.
	
	An easy computation yields
	\begin{equation}
	\label{wignerdil}W(\varphi(\lambda\, \cdot))(x,\omega)=2^{d/2} \lambda^{-d}
	\varphi(\sqrt{2}\lambda\, x)\varphi(\sqrt{2}\lambda^{-1}\, \omega).
	\end{equation}
	For every $1\leq p,q\leq\infty$, the above formula gives
	\[
	\|W(\varphi(\lambda\, \cdot))\|_{M^{p,q}}=2^{d/2} \lambda^{-d}\| \varphi
	(\sqrt{2}\lambda\, \cdot) \|_{M^{p,q}}\|\varphi(\sqrt{2}\lambda^{-1}\, \cdot)
	\|_{M^{p,q}}.
	\]
	By the dilation properties of Gaussians (first proved in \cite[Lemma 1.8]{toft}, see also \cite[Lemma	3.2]{CNJFA2008}):
	\begin{equation}
	\label{eqa3}\|W(\varphi(\lambda\, \cdot))\|_{M^{p,q}}\asymp\lambda
	^{-2d+d/q+d/p}\quad\mathrm{as}\ \lambda\to+\infty.
	\end{equation}
	We now study the $M^{p,q}$-norm of the BJDn $Q^{n}(\varphi(\lambda\, \cdot))$.
  It will be estimated from below obtaining the same expansion as
	in \eqref{eqa3}. In detail,
	\[
	\|Q^{n}(\varphi(\lambda\, \cdot))\|_{M^{p,q}}=\|\mathcal{F}_{\sigma}%
	(\Theta^{n}) \ast W(\varphi(\lambda\, \cdot))\|_{M^{p,q}}.
	\]
	By taking the symplectic Fourier transform and using Lemma \ref{lemma5.1} and
	the product property \eqref{product} we have
	\begin{align*}
	\|\mathcal{F}_{\sigma}(\Theta^{n}) \ast W(\varphi(\lambda\, \cdot
	))\|_{M^{p,q}} & \asymp\|\Theta^{n} \mathcal{F}_{\sigma}[ W(\varphi(\lambda\,
	\cdot))]\|_{W(\mathcal{F} L^{p},L^{q})}\\
	& \gtrsim\|\Theta^{n}(\zeta_{1},\zeta_{2}) \chi(\zeta_{1}\zeta_{2}%
	)\mathcal{F}_{\sigma}[ W(\varphi(\lambda\, \cdot))]\|_{W(\mathcal{F}
		L^{p},L^{q})}%
	\end{align*}
	for any $\chi\in C^{\infty}_{c}(\mathbb{R})$ and $n\in\mathbb{N}_{+}$.
	Choosing the function  $\chi$ compactly supported in the interval $[-1/4,1/4]$ and $\chi\equiv1$ in
	the interval $[-1/8,1/8]$ (the latter condition will be used later), we write
	\[
	\chi(\zeta_{1} \zeta_{2})=\chi(\zeta_{1} \zeta_{2}) \Theta^{n}(\zeta_{1}%
	,\zeta_{2})\Theta^{-n}(\zeta_{1},\zeta_{2})\tilde{\chi}(\zeta_{1} \zeta_{2}),
	\]
	with $\tilde{\chi}\in C^{\infty}_{c}(\mathbb{R})$ supported in $[-1/2,1/2]$
	and $\tilde{\chi}=1$ on $[-1/4,1/4]$, therefore on the support of $\chi$.
	Since by Lemma \ref{lemma5.1} the function $\Theta^{-n}(\zeta_{1},\zeta
	_{2})\tilde{\chi}(\zeta_{1} \zeta_{2})$ belongs to $W(\mathcal{F}
	L^{1},L^{\infty})$, by the product property the last expression can be
	estimated from below as
	\[ \|\chi(\zeta_1\zeta_2)\|_{W(\mathcal{F} L^{p},L^{q})}
	\gtrsim\| \chi(\zeta_{1}\zeta_{2})\mathcal{F}_{\sigma}[ W(\varphi(\lambda\,
	\cdot))]\|_{W(\mathcal{F} L^{p},L^{q})}.
	\]
	Finally in the proof of  \cite[Theorem
	1.4]{ACHA2018} it was shown 
	\begin{equation}
	\label{e27}\| \chi(\zeta_{1}\zeta_{2})\mathcal{F}_{\sigma}[ W(\varphi
	(\lambda\, \cdot))]\|_{W(\mathcal{F} L^{p},L^{q})}\gtrsim\lambda
	^{-2d+d/p+d/q}\quad\mathrm{as}\ \lambda\to+\infty.
	\end{equation}
	Comparing \eqref{e27} with \eqref{eqa3} we obtain the desired conclusion.
\end{proof}

\section{Conclusion and Perspectives}

The generalized Born-Jordan distributions presented in these notes  produce an improvement in the  damping of unwanted artefacts of some signals as the one represented in Fig. 1. For other pictures of  signals where the smoothing effects are visible  we refer to the original paper \cite{BJsplines2019}.

Let us underline that the emergence of interferences  is a well-known drawback of any quadratic representation, the distributions BJDn are not immune to this phenomenon, as it can be seen in the following pictures, which are  short extracts from  real music signals. For comparison, we also used the Spectrogram, which is another member of the Cohen class  (see \cite{bogetal} and references therein), given by
$$|V_f f\phas|^2,\quad f\in\lrd.$$

\begin{figure}[htbp]
	\begin{center}
		\includegraphics[width=1.0\textwidth]{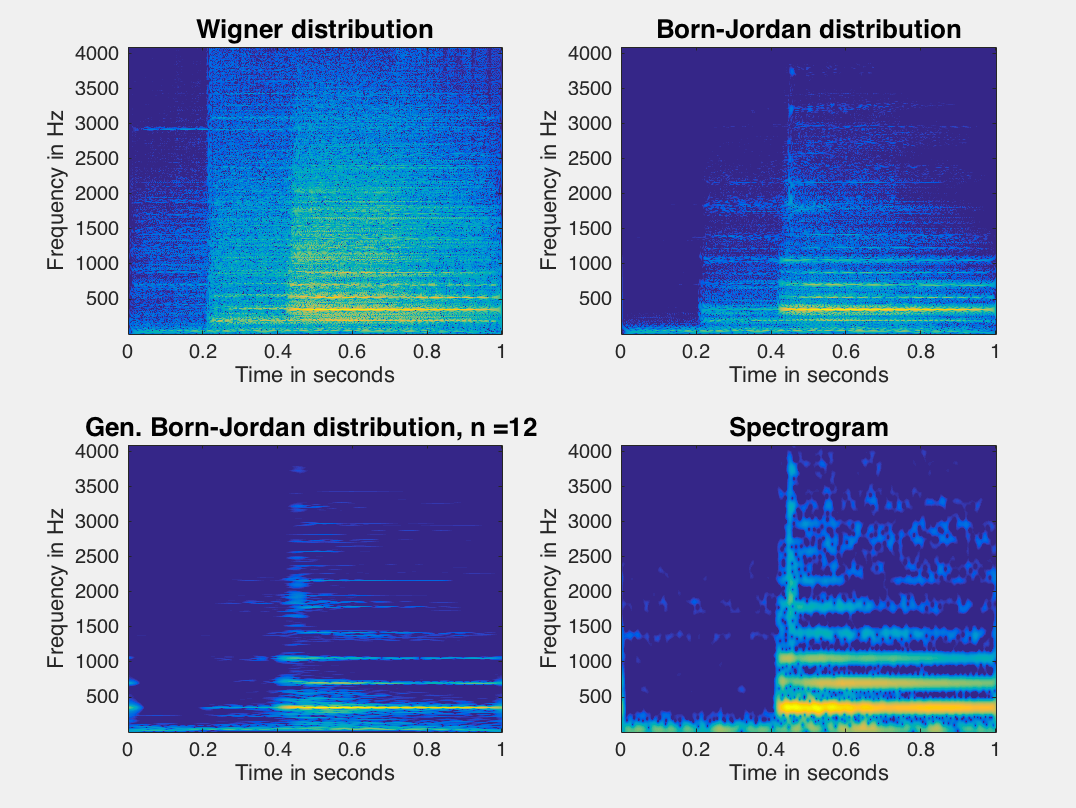}
		\caption{A short extract of a musical signal}
		\label{fig:M2}
	\end{center}
\end{figure}
\begin{figure}[htbp]
	\begin{center}
		\includegraphics[width=1.0\textwidth]{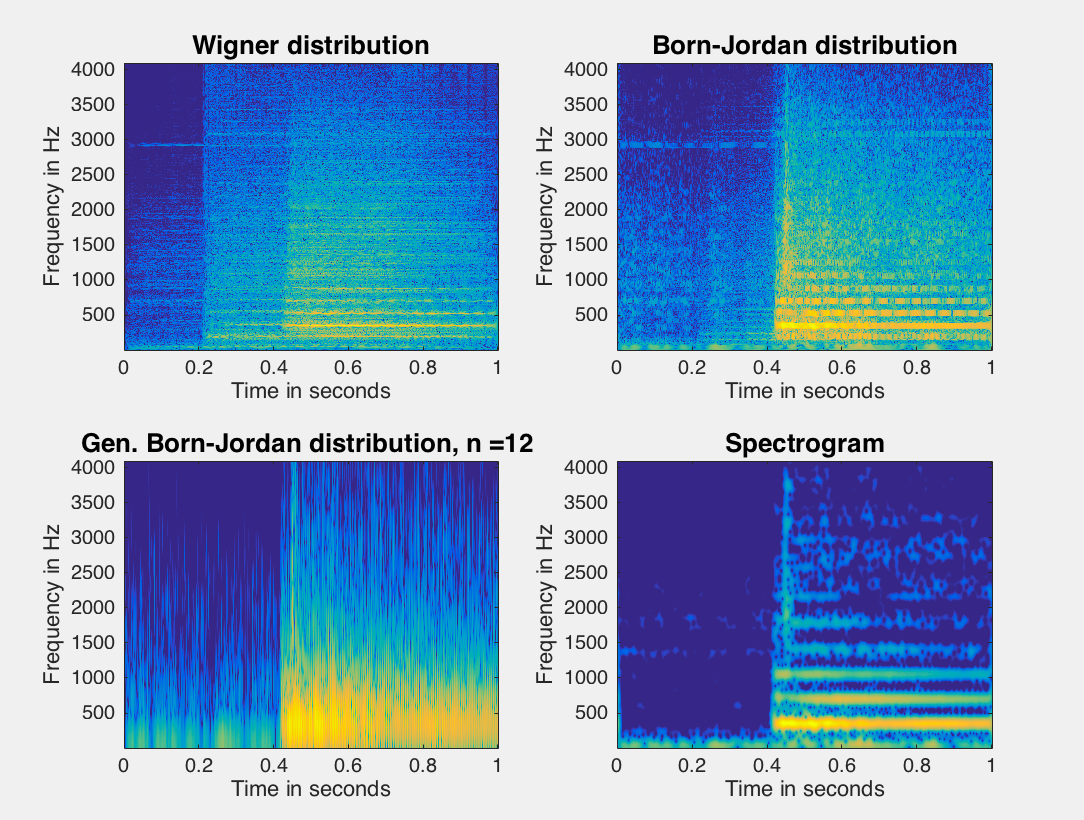}
		\caption{A short extract of a musical signal}
		\label{fig:M1}
	\end{center}
\end{figure}

Many other alternative time-frequency distributions  have been proposed for different practical  purposes, we refer the interested reader to the textbooks \cite{Cohen2,auger}.

If we consider  linear perturbations of the Wigner distribution, introduced and studied in \cite{bayer, bayerprocLuigi}, then  it turns out they do not provide effective damping of artefacts.  
In fact a negative answer concerning reduction of interferences is shown in \cite{EI}.

Although it is clear there is no time-frequency distribution which is  the best time-frequency representation  to analyse any kind of signal, it is an open problem to find the right distribution for a certain class of signals.  
For signals which are time-frequency shifts of Gaussians  the representations BJDs work quite well, as shown in the present note. This suggests the following question:

\emph{Concerning the members of the Cohen class, what are the best possible kernels for damping artefacts?}
 
\vspace{0.5truecm}

{\bf Technical notes.} The figures in these notes were produced using LTFAT (The Large Time-Frequency Analysis Toolbox), cf.~\cite{ltfat}.

\section*{Acknowledgments}
Monika Dörfler has been supported by the Vienna Science and Technology Fund (WWTF) through project MA14-018


\begin{thebibliography}{99.}%
%
%
\bibitem{bayer} Bayer, D.: Bilinear Time-Frequency Distributions and
 Pseudodifferential Operators. PhD Thesis, University of Vienna (2010)
 \bibitem{bayerprocLuigi} Bayer, D., Cordero, E., Gr\"ochenig, K., Trapasso, S. I.: Linear perturbations of the Wigner transform and the Weyl quantization in \textit{Advances in Microlocal and Time-Frequency Analysis}, Applied and Numerical Harmonic Analysis, Birkh\"auser/Springer, (2020) ISBN 978-3-030-36138-9
\bibitem{bogetal}  Boggiatto, P.,  De Donno, G., Oliaro,  A.: Time-frequency representations of Wigner type and pseudo-differential operators.  Trans. Amer. Math. Soc., \textbf{362}(9), 4955--4981 (2010)
\bibitem{bo3}  Boggiatto, P., Carypis,  E., Oliaro, A.: Windowed-Wigner
representations in the Cohen class and uncertainty principles. J. Geom. Anal., \textbf{23}(4), 1753--1779 (2013)
\bibitem{bj} Born, M.,  Jordan, P.: Zur Quantenmechanik, Zeits. Physik, \textbf{34},  858--888 (1925)
\bibitem{Cohen1} Cohen,  L.: Generalized phase-space distribution functions, J. Math. Phys.,  \textbf{7}, 781--786 (1966)
\bibitem{Cohen2} Cohen,  L.:  \textit{Time Frequency Analysis: Theory and Applications}. Prentice Hall (1995)
\bibitem{Cohenbook} Cohen,  L.:  \textit{The Weyl operator and its generalization}. Springer Science \& Business Media, (2012)
\bibitem{CG02} Cordero E., 
Gr\"ochenig, K.: {T}ime-frequency analysis
of {L}ocalization operators,  J. Funct. Anal.,
\textbf{205}(1), 107--131 (2003)
\bibitem{cgn0}  Cordero, E.,   de Gosson, M., Nicola, F.: On the invertibility of Born-Jordan quantization, J. Math. Pures Appl.,  \textbf{105}, 537--557 (2016)
\bibitem{cgnb}  Cordero, E.,   de Gosson, M., Nicola, F.:  Time-frequency Analysis of
Born-Jordan Pseudodifferential Operators, {J. Funct. Anal.},
\textbf{272}(2), 577--598 (2017) 
\bibitem{ACHA2018} Cordero, E.,   de Gosson, M., Nicola, F.:  On the reduction
of the interferences in the Born-Jordan distribution, {Appl. Comput.
	Harmon. Anal.}, \textbf{44}(2), 230--245 (2018)
\bibitem{BJsplines2019} Cordero, E.,  de Gosson, M., D\"{o}fler, M., Nicola, F.: Generalized Born-Jordan Distributions and Applications. {\em Submitted}. 
ArXiv:1811.04601
\bibitem{CNJFA2008}  Cordero, E.,  Nicola, F.: Metaplectic representation on
Wiener amalgam spaces and applications to the Schr\"odinger equation, J.
Funct. Anal., \textbf{254},  506--534 (2008)
\bibitem{EI}  Cordero, E.,  Trapasso, S. I.: Linear perturbations of the Wigner distribution and the Cohen class, Anal. Appl., to appear,
\url{https://doi.org/10.1142/S0219530519500052}
\bibitem{Christensen2016}  Christensen, O.: \textit{An Introduction to Frames and Riesz
Bases}. Birkh\"auser, Basel, (2016)
\bibitem{deGossonWigner} de Gosson, M.: \textit{The {W}igner transform}. Advanced Textbooks in Mathematics, World Scientific Publishing Co. Pte. Ltd. Hackensack, NJ (2017) 
\bibitem{auger}  Hlawatsch, F.,  Auger, F.: \textit{Time-Frequency Analysis}.  Wiley (2008).
\bibitem{Ville1948} Ville, J.: Th\'{e}orie et Applications de la Notion de Signal Analytique. C\^{a}bles et Transmission, \textbf{2}, 61--74 (1948)
\bibitem{feichtinger80}  Feichtinger, H.~G.: Banach convolution algebras of
{W}iener's type, In Proc. Conf. \textit{Function, Series, Operators},  Budapest
August 198, Colloq. Math. Soc. J\'anos Bolyai, \textbf{35} 509--524, North-Holland,
Amsterdam  (1983)
\bibitem{F1} Feichtinger,  H.~G.: \textit{Modulation spaces on locally compact abelian groups}, Technical Report, University Vienna (1983), and also in ``Wavelets and
Their Applications'', M. Krishna, R. Radha, S. Thangavelu, editors, Allied
Publishers,  99--140 (2003)
\bibitem{fei0} Feichtinger, H. G.: On a new Segal algebra, Monatshefte Math.,
\textbf{92}(4), 269--289 (1981).
\bibitem{grochenig} Gr\"ochenig, K.: \textit{Foundation of Time-Frequency Analysis},
Birkh\"auser, Boston MA (2001)
\bibitem{SpringerMG} M. de Gosson,\textit{ Born-Jordan Quantization: Theory and
Applications.}, Springer (2016)
\bibitem{hormander2} H\"ormander,  L.: \textit{Lectures on nonlinear hyperbolic
differential equations}, Springer (1997)
\bibitem{ptt1}  Pilipovi\'c, S., Teofanov, N., Toft, J.: Wave front sets in
Fourier Lebesgue spaces, Rend. Sem. Mat. Univ. Pol. Torino  \textbf{66}(4), 299--319 (2008)
\bibitem{ltfat} Prusa, Z.,  Sondergaard, P. L. , Holighaus, N.,  Wiesmeyr,  C.,  Balazs, P.:
The Large Time-Frequency Analysis Toolbox 2.0. Sound, Music, and Motion, Lecture Notes in Computer Science (2014), pp 419-442
\bibitem{reiter} H. Reiter, Metaplectic Groups and Segal Algebras, Lecture
Notes in Mathematics 1382, Springer-Verlag, Berlin, 1989.
\bibitem{toft} Toft, J.: Continuity properties for modulation spaces, with
applications to pseudo-differential calculus. I, J. Funct. Anal., \textbf{207}(2), 399--429 (2004)
\bibitem{Wigner1932} Wigner, E.: On the quantum correction for
thermodynamic equilibrium. Phys. Rev. \textbf{40}, 799--755 (1932)
%
\end{thebibliography}
\end{document}